\numberwithin{equation}{section}
\theoremstyle{plain}
\newtheorem{theorem}{Theorem}
\newtheorem*{theorem*}{Theorem}
\newtheorem*{lemma*}{Lemma}
\newtheorem{lemma}{Lemma}[section]
\newtheorem{proposition}[lemma]{Proposition}
\theoremstyle{definition}
\theoremstyle{remark}
\newcommand{\ep}{\varepsilon}
\newcommand{\R}{\mathbb{R}}
\newcommand{\US}{\mathbb{S}}
\newcommand\supp{\mathop{\rm supp}}
\newcommand\real{\mathop{\rm Re}}
\newcommand\imag{\mathop{\rm Im}}
\newcommand*{\defeq}{\mathrel{\vcenter{\baselineskip0.5ex \lineskiplimit0pt

                     \hbox{\scriptsize.}\hbox{\scriptsize.}}}%
                     =}
\begin{document}
\title{Semiclassical resolvent bounds for weakly decaying potentials}

\author{Jeffrey Galkowski}
\address{Department of Mathematics, University College London, London, UK}
\email{j.galkowski@ucl.ac.uk}

\author{Jacob Shapiro}
\address{Mathematical Sciences Institute, Australian National University, Acton, ACT, Australia}
\email{Jacob.Shapiro@anu.edu.au}

\thanks{This material is based upon work supported by the National Science Foundation under Grant No. 1440140, while the authors were in residence at the Mathematical Sciences Research Institute in Berkeley, California, during the Fall semester of 2019. J. Shapiro was also supported in part by the Australian Research Council through grant DP180100589}

\begin{abstract}
 In this note, we prove weighted resolvent estimates for the semiclassical Schr\"odinger operator $-h^2 \Delta + V(x) : L^2(\R^n) \to L^2(\R^n)$, $n \neq 2$. The potential $V$ is real-valued, and assumed to either decay at infinity or to obey a radial $\alpha$-H\"older continuity condition, $0\leq \alpha \leq 1$, with sufficient decay of the local radial $C^\alpha$ norm toward infinity. Note, however, that in the H\"older case, the potential need \emph{not} decay. If the dimension $n \ge 3$, the resolvent bound is of the form $\exp \left(C h^{-1 - \frac{1 - \alpha}{3 + \alpha}} [(1-\alpha) \log(h^{-1})+c]\right)$, while for $n = 1$ it is of the form $\exp(Ch^{-1})$. A new type of weight and phase function construction allows us to reduce the necessary decay even in the pure $L^\infty$ case.
\end{abstract}
\maketitle

\section{Introduction and statement of results}
Let $\Delta \defeq \sum_{j =1}^n \partial^2_j \le 0$ be the Laplacian on $\mathbb{R}^n$, $n \neq 2$. In this article, we study the semiclassical Schr\"odinger operator
\begin{equation*}
 P(h) \defeq -h^2 \Delta + V : L^2(\mathbb{R}^n) \to L^2(\mathbb{R}^n),\qquad h > 0,
\end{equation*}
where $V \in L^\infty(\R^n ; \R)$.  We assume either that $V$ satisfies a radial $\alpha$-H\"older continuity condition, $0\leq \alpha \leq 1$, or that it is only $L^\infty$ but decaying. When $n \ge 3$,  we use\\$(r, \theta) = (|x|, x/|x|) \in (0, \infty) \times \US^{n-1}$ to denote polar coordinates on $\R^n \setminus \{0\}$. 

When $V$ is only $L^\infty$, we assume 
\begin{equation}
\label{e:Linfty}
|V|\leq c_1\langle r\rangle^{-2}m(r),
\end{equation}
for some 
\begin{equation}
\label{e:m}
c_1 > 0, \qquad 0< m(r)\leq 1,\qquad m(r)\langle r\rangle^{-1/2}\in L^2(0,\infty),
\end{equation}
and where $\langle x \rangle = \langle r \rangle \defeq (1 + r^2)^{1/2}$.

Since $V \in L^\infty(\R^n; \R)$, by the Kato-Rellich Theorem, $P(h)$ is self-adjoint $L^2(\R^n) \to L^2(\R^n)$ with respect to the domain $H^2(\mathbb{R}^n)$. Therefore, the resolvent $(P - z)^{-1}$ is bounded $L^2(\R^n) \to L^2(\R^n)$ for all $z \in \mathbb{C} \setminus \mathbb{R}$, and we obtain
\begin{theorem}
\label{thm Linfty high d}
Let $n \ge 3$, $m$ as in~\eqref{e:m}, $c_1>0$ and $E>0$. Then there are $C>0$ and $h_0\in (0,1]$ so that for all $s> 1/2,$ there is $C_s>0$ such that for all $V \in L^\infty(\R^n;\R)$ satisfying~\eqref{e:Linfty},
\begin{equation}
\label{e:mainResult Linfty}
g^{\pm}_s(h,\ep)\leq C_s \exp\left( h^{-\frac{4}{3}}(C \log h^{-1}+C_s) \right),\qquad \varepsilon>0,\,h\in(0,h_0],
\end{equation}
where
\begin{equation} \label{defn g}
    g^\pm_{s}(h, \varepsilon) \defeq \|\langle x \rangle^{-s} (P(h) - E \pm i\varepsilon)^{-1}  \langle x \rangle^{-s}\|_{L^2(\R^n) \to L^2(\R^n)}.
\end{equation}
\end{theorem}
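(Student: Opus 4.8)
The plan is to obtain the bound \eqref{e:mainResult Linfty} from an a priori weighted estimate proved by separation of variables followed by a Carleman (exponential weight) argument in the radial variable.

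\emph{Step 1: reduction to a radial equation.} Since $P(h)$ is self-adjoint on $H^2(\R^n)$, the resolvent is bounded for $\varepsilon>0$, and it suffices to prove, for $u$ in a dense class (obtained from $f=(P(h)-E\pm i\varepsilon)u$ by elliptic regularity and a standard decay/approximation argument), that $\|\langle x\rangle^{-s}u\|_{L^2}$ is bounded by the right-hand side of \eqref{e:mainResult Linfty} times $\|\langle x\rangle^{s}f\|_{L^2}$. Passing to polar coordinates and substituting $u(r\theta)=r^{-(n-1)/2}v(r,\theta)$ removes the first order (drift) term and yields
\[
-h^2\partial_r^2 v+\frac{h^2\Lambda}{r^2}v+(V-E\mp i\varepsilon)v=r^{-(n-1)/2}f=:\tilde f\qquad\text{on }(0,\infty)\times\US^{n-1},
\]
with $\Lambda=-\Delta_{\US^{n-1}}+\tfrac{(n-1)(n-3)}{4}\ge0$; the nonnegativity of $\Lambda$ is precisely where $n\ge3$ enters. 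One also records the boundary behaviour of $v$: decay at $r=\infty$ (from $u\in L^2$ and $\varepsilon>0$) and the correct vanishing at $r=0$ (from smoothness of $u$ and $\Lambda\ge0$), so that all boundary terms below are admissible.

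\emph{Step 2: the weight construction — the new ingredient.} One chooses a radial phase $\varphi=\varphi_h\ge0$, nondecreasing, with $\varphi'$ bounded and effectively supported in a large region $r\lesssim R(h)$, together with an auxiliary convexifying factor, and tests the equation for $w=e^{\varphi/h}v$ against a \emph{symmetric} first order multiplier $A\partial_r+\tfrac12 A'$ supplemented by zeroth order corrections. The point of the symmetric multiplier is that $V$, which is merely $L^\infty$, enters the resulting identity only undifferentiated, as $\int A\,V\,|v|^2$ and similar terms; these are absorbed using $|V|\le c_1\langle r\rangle^{-2}m(r)$ together with a weighted Hardy inequality against the good terms $h^2\int A|\partial_r v|^2$ and $\int\frac{h^2\Lambda}{r^2}A|v|^2$ and the curvature term $-h^2\int A''|v|^2$, with the hypothesis $m\langle r\rangle^{-1/2}\in L^2$ from \eqref{e:m} ensuring that the accumulated error is finite and, on the relevant scales, small. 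Balancing the size of $\varphi'$, the amount of convexity $-\varphi''$ needed to carry out this absorption, and the largeness of $R(h)$ needed to render $V$ a negligible perturbation of the free operator, drives the construction to the scale $R(h)\sim h^{-1/3}$ up to logarithmic factors, and hence $\|\varphi\|_{L^\infty}\lesssim h^{-1/3}\log h^{-1}$; the logarithm is exactly the cost of the slow decay (barely faster than $\langle r\rangle^{-2}$) allowed by \eqref{e:m} and \eqref{e:Linfty}.

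\emph{Step 3: the Carleman estimate and conclusion.} Carrying out the integration by parts, and treating $\Lambda$ as a nonnegative operator on $L^2(\US^{n-1})$, produces a weighted estimate controlling $\langle x\rangle^{-s}v$ in terms of $e^{\varphi/h}\tilde f$; the $\mp i\varepsilon$ contribution has a favourable sign (or is absorbed by Cauchy--Schwarz), and since $\varphi$ is taken constant for $r\gtrsim R(h)$, on that range the estimate reduces to the free, nontrapping-type one and requires no smallness of $V$. Undoing the weight using $0\le\varphi\le\|\varphi\|_{L^\infty}\lesssim h^{-1/3}\log h^{-1}$ introduces at most the factor $\exp(\|\varphi\|_{L^\infty}/h)\le\exp\big(h^{-4/3}(C\log h^{-1}+C_s)\big)$, which is \eqref{e:mainResult Linfty}; the $C_s$-dependence comes from the weights $\langle x\rangle^{\pm s}$ in \eqref{defn g} and from the constants in the weighted Hardy step.

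\emph{Main obstacle.} The entire argument stands or falls on producing a phase $\varphi$ (and convexifier) for which the $L^\infty$ potential term can be absorbed \emph{without ever differentiating $V$} and without forcing $\|\varphi\|_{L^\infty}$ larger than $O(h^{-1/3}\log h^{-1})$ --- the usual Carleman weights for Schr\"odinger operators fail one or the other requirement. Verifying the differential inequalities that pin down $\varphi$ and $R(h)$, and simultaneously keeping the $r\to0$ boundary terms harmless with only $\Lambda\ge0$ available (rather than a strict positive lower bound), is the technical heart of the proof.
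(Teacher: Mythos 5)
Your outline does track the paper's strategy — polar reduction with $\Lambda\ge 0$, a Carleman estimate built from a radial weight $w$ (your $A$) and an exponential phase $e^{\varphi/h}$ with $\|\varphi\|_{L^\infty}\sim h^{-1/3}\log h^{-1}$, then unweighting — and the scales you name are the correct ones. But the proposal stops exactly where the proof begins: you never exhibit $\varphi$ and $w$, nor verify the differential inequality that makes the absorption work (the paper's \eqref{e:goalEst}/\eqref{e:keyLower}, proved via the reformulation \eqref{e:keyCalc} in terms of $\Phi=(\log\varphi')'$ and $\mathcal{W}=w/w'$ and the choices \eqref{e:Phase1}, \eqref{e:g1a}, \eqref{e:Phase2}). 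Your own ``main obstacle'' paragraph concedes this, so what is on the page is a plan rather than a proof.

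Beyond the omission, two structural claims would fail as stated. First, with a symmetric multiplier an $L^\infty$ potential does \emph{not} enter only undifferentiated as $\int A\,V\,|v|^2$: in any such identity $V$ either gets differentiated or is paired with $\partial_r v$, and absorbing that pairing by Cauchy--Schwarz against the gradient term costs a factor $h^{-2}|V|^2$ — this is the $B$-term in \eqref{e:keyCalc}, with denominator $w'+4h^{-1}\varphi'w$, and it is precisely what produces the exponent $4/3$; a weighted Hardy inequality does not remove it. Second, taking $\varphi$ constant for $r\gtrsim R(h)\sim h^{-1/3}$ and declaring that region free does not work in $n\ge 3$: the condition \eqref{e:weightCond} needed to discard $\Lambda$ forces $\mathcal{W}\ge r/2$, so with $\varphi'=0$ the error relative to $w'$ is of size $h^{-2}|V|^2\mathcal{W}\sim h^{-2}\langle r\rangle^{-3}m^2$, which at $r\sim h^{-1/3}$ is of order $h^{-1}m^2$, far from $O(1)$ since $m$ need not decay polynomially; it only becomes harmless near $r\sim h^{-2/3}$, and running the quadratic weight out that far makes $w^2/w'\sim r^3$, which is incompatible with the $\langle x\rangle^{-s}$, $s>1/2$, weights in \eqref{defn g}. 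The paper's resolution — and the missing idea in your sketch — is to keep $\varphi'>0$ in the outer region with slow polynomial decay, $\varphi_0'\sim \tau_0(a+1)^{\eta}(r+1)^{-1-\eta}$ for $r>a=a_0h^{-M}$, $M=2\sigma/(2-\eta)$ (see \eqref{e:Phase2}, \eqref{e:M}), so that the $4h^{-1}\varphi' w$ term in the denominator performs the absorption there, while switching $\mathcal{W}$ to $(r+1)^{1+\eta}/2$ keeps $w$ bounded by a power of $h^{-1}$ and keeps $w^2/w'\lesssim h^{-CM}(1+r)^{1+\eta}$, which is what makes the final density/unweighting argument close.
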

\bigskip
When $V$ has some radial $\alpha$-H\"older regularity,  $0\leq\alpha\leq 1$, we need not assume that $V$ decays towards infinity. Instead, we suppose
\begin{equation}
\label{Holder short range}
V\in L^\infty,\qquad
\limsup_{y\to 0^+}\sup_{r}\frac{|V(r\theta)-V((r+y)\theta)|}{|y|^\alpha}\langle r\rangle^{3}m^{-2}(r)\leq c_2,\qquad \theta\in \US^{n-1},
\end{equation}
for some $c_2 > 0$. We also define
\begin{gather}
V_\infty \defeq \limsup_{r\to \infty}\sup_{\theta\in \US^{n-1}}V(r\theta),  \label{defn E 0}\\
0 < \delta_V \defeq \inf\Big\{ y > 0  \mid \sup_{r}\frac{|V(r\theta)-V((r+y)\theta)|}{|y|^\alpha}\langle r\rangle^{3}m^{-2}(r)> 2c_2\Big\},\label{defn eps0} 
\end{gather}
and for $E>V_\infty$,
\begin{gather}
 R_{E,V} \defeq \sup\Big\{r \mid \sup_{\theta \in \US^{n-1}} V(r\theta) > \frac{E + 3 V_\infty}{4}\Big\} \label{e:R_E}. 
\end{gather}
\smallskip

\noindent{\bf{Remark:}} Note that when $\alpha=0$ and~\eqref{Holder short range} holds, $V$ is still only $L^\infty$, but the magnitude of its fluctuations are decaying faster than those in~\eqref{e:Linfty}. 
\smallskip

In this H\"older regular case, we obtain
\begin{theorem}
\label{thm higher dim}
Let $n \ge 3$, $m$ as in~\eqref{e:m}, $c_2>0$, $R_E>0$, $C_V\in \mathbb{R}$, ${E_\infty}\in \mathbb{R}$, and $E>E_\infty$. Then there is $C>0$ such that for all $\delta_1>0$, there is $h_0\in (0,1]$ so that for all $s> 1/2$, there is $C_s>0$ so that for $V \in L^\infty(\R^n;\R)$ obeying $\sup_{\mathbb{R}^n}V\leq C_V$, $V_\infty\leq E_{\infty}$, $\delta_1 \le \delta_V$, $R_{E,V}\leq R_E$, and \eqref{Holder short range} for some $0\leq\alpha \leq 1$,
\begin{equation}
\label{e:mainResult}
g^{\pm}_s(h,\ep)\leq C_s \exp\left( h^{-1-\sigma_\alpha}(C\sigma_\alpha \log h^{-1}+C_s) \right),\qquad \varepsilon>0,\,h\in(0,h_0],
\end{equation}
where
$$
\sigma_\alpha:=\frac{1-\alpha}{3+\alpha}.
$$
\end{theorem}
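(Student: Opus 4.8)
The plan is to prove the a priori estimate behind \eqref{e:mainResult}: setting $f=\langle x\rangle^{-s}g$ and $u=(P(h)-E-i\varepsilon)^{-1}f$, it suffices to bound $\|\langle x\rangle^{-s}u\|_{L^2}$ by the right-hand side of \eqref{e:mainResult} times $\|\langle x\rangle^{s}f\|_{L^2}$, uniformly for $\varepsilon\in(0,1)$ (the range $\varepsilon\ge1$ is trivial and the $-i\varepsilon$ case is symmetric), after which \eqref{e:mainResult} follows by density. Passing to polar coordinates and conjugating by $r^{(n-1)/2}$ replaces $P(h)-E-i\varepsilon$ by $-h^2\partial_r^2+\tfrac{h^2(-\Delta_\theta+c_n)}{r^2}+V-E-i\varepsilon$ on $L^2((0,\infty)\times\US^{n-1};dr\,d\theta)$, where $c_n=\tfrac{(n-1)(n-3)}{4}\ge0$ because $n\ge3$. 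The extra angular term is non-negative and commutes with every radial operation, so in the commutator identities below it only contributes a favorable sign; this is how the non-radiality of $V$ is absorbed without separating variables, and $c_2<0$ is exactly why $n=2$ is excluded.

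Since $V$ is only radially $C^\alpha$, one then replaces it by a radial mollification $V_\#$ at a scale $\delta\in(0,\delta_V)$ to be optimized: \eqref{Holder short range} yields $|V-V_\#|\lesssim\delta^\alpha\langle r\rangle^{-3}m^2(r)$ and $|\partial_rV_\#|\lesssim\delta^{\alpha-1}\langle r\rangle^{-3}m^2(r)$, so $V_\#$ is differentiable with controlled radial derivative while $V-V_\#$ is small in a weight matched to the centrifugal decay $\langle r\rangle^{-2}$. Working with $P_\#=-h^2\Delta+V_\#$, the core step is a \emph{global} Carleman estimate on $(0,\infty)$ for $P_\#$ with a carefully built radial weight $\varphi=\varphi_{h,\delta}\ge0$ --- the new weight/phase construction advertised in the abstract --- which is increasing, has $\varphi',\varphi''$ effectively supported in a fixed set $\{r\lesssim R_E\}$ with a prescribed convexity pattern across the transition there, and has height $\|\varphi\|_\infty=:\tau$. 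Writing $e^{\varphi/h}P_\#e^{-\varphi/h}=A+iB$ with $A,B$ formally self-adjoint, one uses $\|Av+i(B-\varepsilon)v\|^2=\|Av\|^2+\|(B-\varepsilon)v\|^2-i\langle[A,B]v,v\rangle$; the weight is chosen so that $-i[A,B]$, together with the positive angular term, is bounded below by a positive density of order $h\tau^3$ times $|h\partial_rv|^2+|v|^2+r^{-2}|h\nabla_\theta v|^2$, the boundary term at $r=0$ vanishing ($n\ge3$, regularity of $u$) and the one at $r=\infty$ having the correct sign from the choice $+i\varepsilon$, the sign of $\varphi'$ there, and the weights $\langle x\rangle^{\mp s}$ with $s>1/2$. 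The errors this gain must dominate are the commutator contribution $2h\varphi'\partial_rV_\#\sim h\tau\delta^{\alpha-1}\langle r\rangle^{-3}m^2$ and, upon passing from $P_\#$ back to $P$, the term $\int|V-V_\#|\,|v|^2\lesssim\delta^{2\alpha}\|v\|^2$; the assumption $m\langle r\rangle^{-1/2}\in L^2$ is precisely what makes the far-field part of these terms, and of $V$ itself at the centrifugal scale, integrable.

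Imposing gain $\gtrsim\max(h\tau\delta^{\alpha-1},\delta^{2\alpha})$ forces $\tau^2\gtrsim\delta^{\alpha-1}$ and then $\delta\lesssim h^{2/(3+\alpha)}$, hence $\tau\sim h^{-(1-\alpha)/(3+\alpha)}=h^{-\sigma_\alpha}$ up to a $\log h^{-1}$ (from the degeneracy of the gain near the transition and from the approximation step). Undoing the conjugation costs $e^{\|\varphi\|_\infty/h}/\sqrt{\text{gain}}\lesssim h^{-1/2}\tau^{-3/2}e^{C\tau/h}$, which is exactly $\exp(h^{-1-\sigma_\alpha}(C\sigma_\alpha\log h^{-1}+C_s))$ once the polynomial factor is absorbed into $C_s$; uniformity of $C$, $h_0$, $C_s$ over the admissible class uses $\delta_1\le\delta_V$ (so $\delta(h)<\delta_V$ for $h\le h_0(\delta_1)$), $R_{E,V}\le R_E$, $\sup V\le C_V$ and $V_\infty\le E_\infty<E$, since $\varphi$ and $\delta(h)$ depend only on these parameters, not on $V$ itself. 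The main obstacle is this weight/commutator step: one must produce a single radial $\varphi$ whose second-order data gives gain $\gtrsim\max(h\tau\delta^{\alpha-1},\delta^{2\alpha})$ uniformly in $r$ and in the angular eigenvalue, while keeping $\mathrm{osc}(\varphi)$ as small as $h^{-\sigma_\alpha}\log h^{-1}$ and the boundary behaviour at $r=0,\infty$ under control --- and making the optimization land on $\sigma_\alpha=\frac{1-\alpha}{3+\alpha}$ rather than a worse exponent. As consistency checks, $\alpha=1$ gives $\sigma_1=0$ and the classical $\exp(C/h)$ bound, while $\alpha=0$ gives $\sigma_0=\tfrac13$ and recovers Theorem~\ref{thm Linfty high d}.
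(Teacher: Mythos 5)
Your scaling heuristics are essentially correct --- mollifying at scale $\delta\sim h^{2/(3+\alpha)}$, taking a phase of height $\tau\sim h^{-\sigma_\alpha}$, and balancing the gain against $h\tau\delta^{\alpha-1}$ and $\delta^{2\alpha}$ reproduces exactly the exponents $\rho=\tfrac{2}{3+\alpha}$ and $\sigma=\sigma_\alpha$ used in the paper --- but the argument has a genuine gap at the step you yourself call ``the main obstacle'': the actual construction of a phase (and, in the paper, an additional weight $w$) for which the required positivity holds is asserted, not carried out, and this construction is the substance of the proof (Proposition~\ref{p:key}, obtained by prescribing the logarithmic derivatives $\Phi=(\log\varphi')'$ and $\mathcal W=w/w'$ as in \eqref{e:Phase1}--\eqref{e:Phase2}, with $\Phi_1$ chosen in \eqref{e:g1a} in terms of $m$, and a transition radius $a\sim h^{-M}$). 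Moreover, two structural claims in your sketch would fail as stated. First, a commutator gain ``of order $h\tau^3$ uniformly in $r$'' is impossible: since $\|Av\|^2,\|(B-\ep)v\|^2\ge 0$, it would give $\|e^{\varphi/h}(P_\#-E\pm i\ep)e^{-\varphi/h}v\|^2\gtrsim h\tau^3\|v\|^2$ for all $v\in C^\infty_{\text{comp}}$, hence an $\ep$-uniform \emph{unweighted} resolvent bound; but $P_\#$ is self-adjoint and (e.g.\ in the decaying case \eqref{e:Linfty}, the $\alpha=0$ endpoint you invoke as a consistency check) $E$ lies in its spectrum, so the unweighted norm is exactly $1/\ep$. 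The gain density must decay like $(1+r)^{-1-\eta}$, which is precisely what the second weight $w$ --- absent from your scheme --- supplies via \eqref{univ lwr bd w prime} and \eqref{w squared over w prime}, and it is why only weighted bounds with $s>1/2$ are claimed. Relatedly, the $\ep$-term is not disposed of by a boundary sign (that could work for at most one of the two signs $\pm$); in the paper it is kept and absorbed at the end using $2\ep\|v\|^2=-2\imag\langle (P(h)-E\pm i\ep)v,v\rangle$.

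Second, the phase cannot have $\varphi',\varphi''$ ``effectively supported in $\{r\lesssim R_E\}$.'' Beyond the support of $\varphi'$ the mollification errors, e.g.\ $\mathcal W|V_h'|\sim h^{-2\sigma_\alpha}\langle r\rangle^{-2}m^2(r)$, must be absorbed by $E-V_h=O(1)$ alone, which fails for $r\lesssim h^{-\sigma_\alpha}$ when $m$ does not decay pointwise; this forces $\varphi'\sim\tau/(1+r)$ to persist out to $r\sim h^{-M}$ with $M\sim\sigma_\alpha$ (cf.\ \eqref{e:M}), and it is exactly $\int_0^a\varphi_0'\,dr\sim\tau_0\log a\sim \tau_0\,\sigma_\alpha\log h^{-1}$ that produces the $C\sigma_\alpha\log h^{-1}$ in \eqref{e:mainResult} (see \eqref{e:phiBoundMe}). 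With a compactly supported phase your bookkeeping would produce no log at all but the Carleman step would not close, and attributing the log to ``degeneracy near the transition'' does not explain why its coefficient carries the factor $\sigma_\alpha$ that makes it vanish at $\alpha=1$. Finally, note that the paper does not use the $A+iB$ sum-of-squares identity at all: it runs an energy-method Carleman argument, differentiating the weighted spherical energy $(wF)'$ as in \eqref{deriv wF}--\eqref{e:keyCalc}; the two routes are related, but the extra weight $w$ (constrained by \eqref{e:weightCond} when $n\ge 3$) is what simultaneously yields the spatial weights $\langle x\rangle^{\mp(1+\eta)/2}$ and accommodates the weak decay hypothesis \eqref{e:m}, so any completion of your plan would have to reintroduce an equivalent device.
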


\smallskip

In the one-dimensional case,~\eqref{Holder short range} can be relaxed further to
\begin{equation} \label{Holder condition oned}
\begin{gathered}
\limsup_{y\to 0}\sup_x \frac{|V(x) - V(x+y)|}{m_0(|x|)} \le  c_0,
\end{gathered}
\end{equation}
for some
\begin{equation}
\label{e:m1}
c_0 > 0, \qquad 0<m_0(r) \leq 1,\qquad m_0\in L^1(0,\infty).
\end{equation}
We then define
\begin{equation} \label{defn delta0}
0 < \delta_{0,V} \defeq  \inf \{ y > 0 \mid \sup_x \frac{|V(x) - V(x  + y)|}{m_0(|x|)} > 2c_0 \}.
\end{equation}
Then we have the following one dimensional result.
\begin{theorem} \label{thm oned}
Let $n =1$, $m_0$ as in~\eqref{e:m1}, $c_0>0$, $R_E>0$, $C_V,E_\infty\in \mathbb{R}$ and $E>E_\infty$. Then there is $C>0$ such that for all $\delta_0 >0$, there is $h_0 \in (0,1]$ so that for all $s> 1/2$, there is $C_s>0$ so that for $V \in L^\infty(\R; \R)$ obeying  $ \delta_0 \le \delta_{0,V}$, $\sup_{\mathbb{R}} V \leq C_V$, $R_{E,V}\leq R_E$, and \eqref{Holder condition oned},
\begin{equation} \label{resolv est oned}
 g_{s}^\pm(h, \varepsilon)  \le C_s \exp \left( C h^{-1} \right), \qquad \varepsilon > 0, \, h \in (0,h_0].
\end{equation}
\end{theorem}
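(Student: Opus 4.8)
The plan is to use the classical Carleman-estimate / exponential-weight method for semiclassical resolvent bounds, specialized to one dimension where the argument is simplest and the gain is strongest (here we only need $\exp(Ch^{-1})$ rather than the subexponential-in-$h^{-1}$ losses of the higher-dimensional theorems). First I would reduce to an a priori estimate: it suffices to show that there exist $C,h_0$ and, for each $s>1/2$, a constant $C_s$ such that every $u\in H^2(\mathbb{R})$ with $f\defeq (P(h)-E\pm i\varepsilon)u$ satisfies
\[
\|\langle x\rangle^{-s}u\|_{L^2}\le C_s e^{Ch^{-1}}\|\langle x\rangle^{s}f\|_{L^2},
\]
uniformly in $\varepsilon>0$ and $h\in(0,h_0]$; this is equivalent to the claimed bound on $g_s^\pm$ by a standard density/self-adjointness argument. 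Because $\varepsilon>0$ one may also drop the $\pm i\varepsilon$ term after noting it only helps (it contributes a favorably-signed imaginary part), so effectively we work with $(-h^2\partial_x^2+V-E)u=f$ and a limiting-absorption-type boundary behaviour.

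The heart of the matter is to construct a real weight $\varphi=\varphi(x)$, piecewise smooth, with $\varphi'$ bounded and $\varphi$ growing like $h^{-1}$ times a bounded function (so that $e^{\varphi/h}$ is $e^{O(h^{-1})}$), together with a conjugated operator $P_\varphi\defeq e^{\varphi/h}(P-E)e^{-\varphi/h}$, and to derive from integration by parts a Rellich/Morawetz-type identity controlling $\int |hu'|^2 + \int(E-V)|u|^2$ in terms of the right-hand side. In one dimension the natural choice is an exponential weight of the form $\varphi(x)=\int_0^{|x|}\chi_\tau(t)\,dt$ (or the new "phase function" variant alluded to in the abstract), where $\chi_\tau$ is a small positive bump adapted to the scale on which $V$ fails to be well-behaved; the key structural inputs are exactly the hypotheses of the theorem: $\sup V\le C_V$ and $R_{E,V}\le R_E$ confine the region where $V$ can exceed $\frac{E+3V_\infty}{4}$ to a fixed compact set, so outside a fixed ball one has $E-V$ bounded below by a positive constant (a "non-trapping at infinity" condition), while the relaxed Hölder condition \eqref{Holder condition oned}–\eqref{e:m1} with $\delta_0\le\delta_{0,V}$ controls the oscillation of $V$ on the $\delta_0$-scale by an $L^1$ function $m_0$, which is precisely what makes the error terms from differentiating $V$ (or from a finite-difference replacement for $V'$ in the $C^\alpha$ setting) integrable and hence absorbable. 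On the compact "bad" set one pays a factor $e^{Ch^{-1}}$ for free by brute force, using self-adjointness and the spectral theorem locally (a local version of the trivial bound $\|(P-E\pm i\varepsilon)^{-1}\|\le \varepsilon^{-1}$ combined with a standard iteration is not quite enough — instead one uses a Carleman estimate with a convex weight on the bounded region to pass over the turning points / the set where $E-V$ changes sign).

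The main steps in order: (1) state and prove the Carleman/Rellich identity for $P_\varphi$ in 1D, collecting the boundary terms at $x=0$ and $x=\pm\infty$ and showing the latter vanish by the limiting absorption (or by working first with $\varepsilon>0$ fixed and a compactly supported approximation, then taking limits); (2) on the exterior region $|x|>R$ choose $\varphi$ so that the bulk term is coercive — this uses $E-V\ge c_0'>0$ there — while the error from $V'$/oscillation is controlled by $\int_R^\infty m_0<\infty$, chosen small by taking $R$ large; (3) on the interior region $|x|\le R$, absorb everything into a term of size $e^{Ch^{-1}}\|f\|^2$ using a separate convex Carleman weight that dominates the (bounded) potential $V-E$ — here the constant $C$ depends on $R_E$, $C_V$, $E_\infty$, $E$ but not on the particular $V$; (4) patch the two weights together smoothly (the jump in $\varphi'$ at $|x|=R$ produces a favorably-signed distributional term, as in the standard Morawetz multiplier arguments) and read off the weighted resolvent bound, tracking that the $s>1/2$ weight $\langle x\rangle^{\pm s}$ is handled by the usual trick of writing $\langle x\rangle^{-s}=\langle x\rangle^{-s}$ and using $\langle x\rangle^{-1}\in L^2$-type estimates near infinity (this is where $C_s$ and the restriction $s>1/2$ enter). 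I expect the main obstacle to be step (2)–(4): designing $\varphi$ on the exterior so that it simultaneously (a) stays $O(h^{-1})$-bounded, (b) yields a strictly positive bulk term after the oscillation error $\sim\int m_0$ is subtracted, and (c) matches the interior convex weight across $|x|=R$ without destroying positivity — this balancing, together with verifying that the relaxed hypothesis $m_0\in L^1$ (rather than the stronger $L^2$-type decay of the $\alpha>0$ case, cf.\ \eqref{e:m}) really does suffice in one dimension because there is no angular Laplacian and no centrifugal term to fight, is the delicate technical core of the argument.
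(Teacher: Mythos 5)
Your overall strategy --- conjugation by a bounded weight $e^{\varphi/h}$, an energy/Morawetz identity, exterior positivity of $E-V$ coming from $R_{E,V}\le R_E$, a convex/large phase to pay $e^{Ch^{-1}}$ on a fixed compact set, and a density argument to convert the a priori estimate into the resolvent bound --- is the same as the paper's. The gap is in your step (2), exactly at the point you yourself call the technical core. Under \eqref{Holder condition oned} the potential need not be differentiable (nor decaying), so the ``error from $V'$/oscillation'' must come from a regularization: mollifying at scale $\gamma\le\delta_{0}$ gives $|V_\gamma'|\lesssim c_0\gamma^{-1}m_0$ and a remainder $|V-V_\gamma|\lesssim c_0 m_0$, and in the energy identity the remainder is paired with $h^{-1}$ (see the term $h^{-1}|R_h|$ in \eqref{e:keyCalc} and its one-dimensional form \eqref{e:penult est oned}); with the natural choice $\gamma\sim h$, as in \eqref{up bd V h prime oned}--\eqref{up bd R h oned}, the oscillation error is pointwise of size $h^{-1}m_0(|x|)$, not $m_0(|x|)$. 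So it is \emph{not} ``controlled by $\int_R^\infty m_0$, chosen small by taking $R$ large'': for any $h$-independent $R$ that quantity is $\sim h^{-1}\int_R^\infty m_0\to\infty$ as $h\to0$, and $m_0\in L^1$ carries no rate that would let $R$ depend on $h$. Nor can an $h^{-1}m_0$-sized term be absorbed into the $O(1)$ coercive bulk $E-V\ge\tfrac34(E-E_\infty)$, or into $(\varphi')^2$ without making $\varphi$ unbounded and ruining the $e^{Ch^{-1}}$ target.

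What your outline is missing is a second weight, distinct from the Carleman exponent $\varphi$: the paper multiplies the pointwise energy $F$ by $w$ and chooses $\mathcal{W}=w/w'=\delta h/m_0$, i.e.\ $w(x)=\exp\bigl(-\tfrac{1}{\delta h}\int_{|x|}^\infty m_0(s)\,ds\bigr)$ as in \eqref{e:choice1d}, so that the logarithmic derivative $w'/w=m_0/(\delta h)$ dominates the $h^{-1}m_0$-sized errors ($\mathcal{W}|V_h'|\le C\delta$ and $\mathcal{W}^2h^{-2}|R_h|^2\lesssim\delta^2h^2$ in \eqref{e:keyCalc}), while the total price is only the ratio between $\sup w\le 1$ and $\inf w'\gtrsim e^{-C\|m_0\|_{L^1}/h}$, cf.\ \eqref{univ lwr bd w prime 1d} --- precisely the allowed $e^{Ch^{-1}}$. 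This choice of $\mathcal{W}$ is admissible only because $n=1$ removes the $\Lambda\ge0$ term and hence the constraint $2w/r-w'\ge0$ (i.e.\ $\mathcal{W}\ge r/2$); that, more than the mere absence of a centrifugal term in the coercivity, is where one dimension is used. Your interior step (3) and the patching step (4) are fine and correspond to taking $\tau_0$ large in the bounded phase $\varphi'=\tau_0(|x|+1)^{-2}$, but step (2) fails as written without such a $w$. Two minor points: $\varphi$ itself should be $O(1)$ (not ``$h^{-1}$ times a bounded function''), and the $\pm i\varepsilon$ term is not simply of favorable sign in the energy identity --- it produces $\mp2\varepsilon w\imag\langle u,u'\rangle$, which is handled at the end through $2\varepsilon\|v\|^2=\mp2\imag\langle(P(h)-E\pm i\varepsilon)v,v\rangle$ as in \eqref{epsilon v}.
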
 

Bounds on $g^{\pm}_s$ are known to hold under various geometric, regularity, and decay assumptions. Burq \cite{bu98, bu02} showed  $g^{\pm}_s \le e^{Ch^{-1}}$ for $V$ smooth and decaying sufficiently fast near infinity, and also for more general perturbations of the Laplacian. Cardoso and Vodev \cite{cavo} extended Burq's estimate to infinite volume Riemannian manifolds which may contain cusps. 

In lower regularity and $n \neq 2$, Datchev \cite{da14} showed $g^{\pm}_s \le e^{Ch^{-1}}$, provided $V, \partial_r V \in L^\infty(\R^n; \R)$ and have long-range decay. The second author \cite{sh19} obtained the same bound for $n =2$, and under the same assumptions, except with $\partial_r V$ replaced by $\nabla V$ \cite{sh19}. On the other hand, Vodev \cite{vo14} showed that, if $n \ge 3$ and $V$'s radial $\alpha$-H\"older moduli are $O(h^\nu \langle r \rangle^{-\kappa})$, where $\nu > 0 $, $\kappa > 1$, and $\alpha \ge 1 - 2\nu$, then $g^{\pm}_s \le e^{Ch^{-\ell}}$, where
\begin{equation*}
\ell = \max \left\{0, \frac{2(1 - \nu - \alpha)}{1 - \alpha} \right\} < 1.
\end{equation*}

If $V \in L_{\text{comp}}^\infty(\R^n; \R)$, $n \ge 2$, it was previously shown \cite{klvo19,sh17} that $g^{\pm}_s \le e^{Ch^{-4/3}\log(h^{-1})}$. This same bound was extended to short range potentials on $\R^n$ \cite{vo19a, vo19b}, and then to short range potentials on a large class of asymptotically Euclidean manifolds \cite{vo19c}. If $n =1$, $g^{\pm}_s \le e^{Ch^{-1}}$, even if $V \in L^1(\R;\R)$ \cite{dash19}.

Theorems~\ref{thm Linfty high d} and~\ref{thm higher dim} improve upon the existing literature in several ways. First, in the pure $L^\infty$ case~\eqref{e:Linfty}, Theorem~\ref{thm Linfty high d} reduces the required decay for $V$ from that in~\cite{vo19a,vo19b}. While we are still unable to obtain estimates when $V$ is an arbitrary short range $L^\infty$ potential without an additional loss of powers of $h$ in $\log (g_s^{\pm}(h,\ep))$, the decay  assumed in~\eqref{e:Linfty} appears to improve on the existing literature by one order in $r$. Secondly, the assumptions for Theorem~\ref{thm higher dim}~\eqref{Holder short range} allow for \emph{non-decaying} potentials provided some control on the local oscillations of the potential $V$ (even if $V$ is not H\"older continuous for any positive $\alpha$). Finally, as the H\"older constant of the potential varies between $0$ and $1$, the results interpolate between those in the $L^\infty$ and Lipschitz cases, with the bound on $g_s^{\pm}(h,\ep)$ agreeing with the existing estimates at both endpoints.

Next, Theorem~\ref{thm oned} seems to be the first semiclassical resolvent estimate in one dimension that does not require $V$ or  $\partial_xV$ to belong to $L^1(\R;\R)$. Again, by imposing some condition on the oscillations of $V$, we are able to handle even non-decaying potentials.

In dimension $n \ge 2$, it is an open problem to determine the optimal $h$-dependence of the resolvent for $V \in L^\infty$ or $V$ satisfying \eqref{Holder short range}. In contrast, it is well known that the bound $e^{Ch^{-1}}$ cannot be improved in general. See, for instance, \cite{ddz15} and the references cited there.

To prove Theorems~\ref{thm Linfty high d},~\ref{thm higher dim} and~\ref{thm oned}, we adapt the Carleman estimates proved in \cite{vo19a} and \cite{dash19}. In addition to the modifications necessary to take advantage of the H\"older regularity of $V$, the main improvement in our argument is to determine $\varphi$ and $w$ from the logarithmic derivatives of respectively $\varphi'$ and $w$. This dramatically simplifies the computations necessary to construct the requisite phases and weights. See~\eqref{e:defPhiW} and~\eqref{e:keyCalc} for the main quantities one must estimate.

In the final stages of writing this note, we learned of the article~\cite{vo20}, in which Vodev uses a somewhat different weight and phase construction to study H\"older potentials analogous to ours. However, the assumed decay in that article is stronger than what we need here. On the other hand, Vodev's article gives the local Carleman estimates necessary to handle dimension $n=2$ as well as the case where $\mathbb{R}^n$ is replaced by the exterior of a smooth obstacle.

\section{Preliminary Calculations and Lemmata}
As in most previous proofs of resolvent estimates for low regularity potentials, the backbone of the proof is a Carleman estimate. We start from the identity 
\begin{equation*}
    r^{\frac{n-1}{2}}(- \Delta) r^{-\frac{n-1}{2}} = -\partial^2_r + \Lambda,
\end{equation*}
where 
\begin{equation} \label{Lambda positive}
    \Lambda \defeq \frac{1}{r^2}\left( -\Delta_{\US^{n-1}} + \frac{(n-1)(n-3)}{4} \right) \ge 0,
\end{equation}
and $\Delta_{\US^{n-1}}$ denotes the negative Laplace-Beltrami operator on $\US^{n-1}$. Then,
we form the conjugated operator
\begin{equation} \label{conjugation}
\begin{split}
  P^{\pm}_\varphi(h) &\defeq e^{\varphi/h} r^{\frac{n-1}{2}}\left( P(h) - E \pm i\varepsilon \right) r^{-\frac{n-1}{2}} e^{-\varphi/h}\\
  &= -h^2\partial^2_r + 2h \varphi' \partial_r + h^2\Lambda + V -(\varphi')^2 + h\varphi''  - E \pm i\varepsilon.
 \end{split}
\end{equation}

Now, let $V_h\in C^\infty((0,\infty)_r;L^\infty(\US_\theta^{n-1}))$ be a smoothed approximation to $V$, and define 
\begin{equation}
\label{e:remainder}
R_h:=V-V_h.
\end{equation}
For $n\geq 3$ and $u \in e^{\varphi/h} r^{(n-1)/2} C^\infty_{\text{comp}}(\R^n)$, we define a spherical energy functional $F[u](r)$,
\begin{equation} \label{F high dim}
    F(r) = F[u](r) \defeq \|hu'(r, \cdot)\|^2 - \langle (h^2\Lambda + V_h -(\varphi')^2- E)u(r, \cdot), u(r, \cdot) \rangle,
\end{equation}
where $\| \cdot \|$ and $\langle \cdot, \cdot \rangle$ denote the norm and inner product on $L^2(\mathbb{S}_\theta^{n-1})$, respectively.  The derivative of $F$, in the sense of distributions on $(0,\infty)$, is
\begin{equation*}
\begin{split}
    F' &= 2  \real \langle h^2 u'', u'\rangle -2 \real \langle (h^2\Lambda + V_h - E)u, u' \rangle + 2r^{-1} \langle h^2 \Lambda u, u \rangle - ((\varphi')^2 - V_h)' \| u\|^2  \\
    &= -2 \real \langle P^{\pm}_\varphi(h) u, u' \rangle + 2r^{-1} \langle h^2 \Lambda u, u \rangle + ((\varphi')^2-V_h)'\|u\|^2 + 4h^{-1} \varphi' \|hu'\|^2 \\
    &\qquad\mp 2\varepsilon \imag \langle u,u'\rangle + 2\real \langle (R_h + h \varphi'') u, u' \rangle. 
    \end{split}
\end{equation*}
Thus $(wF)'$, as a distribution on $(0,\infty)$, is given by 
\begin{equation} \label{deriv wF}
\begin{split}
    (wF)' &= w'F + wF' \\
    &= w'\|hu'\|^2 - w'\langle (h^2\Lambda + V_h- (\varphi')^2- E)u, u \rangle \\
    & -2 w \real \langle P^{\pm}_\varphi(h) u, u' \rangle + 2wr^{-1} \langle h^2 \Lambda u, u \rangle + w((\varphi')^2 - V_h)' \| u\|^2 + 4h^{-1} w \varphi \|hu'\|^2 \\
    &\qquad \mp 2\varepsilon w \imag \langle u,u'\rangle + 2\real \langle (R_h + h \varphi'') u, u' \rangle \\
    &= -2 \real w \langle P^{\pm}_\varphi(h) u, u' \rangle \mp 2\varepsilon w \imag \langle u,u'\rangle + (2wr^{-1} - w') \langle h^2\Lambda u,u\rangle \\
    &\qquad+ (4h^{-1}w \varphi' + w')\|hu'\|^2  + (w(E+ (\varphi')^2-V_h ))' \|u\|^2 + 2w\real \langle (R_h + h \varphi'') u, u' \rangle. 
    \end{split}
\end{equation}
Using \eqref{Lambda positive}  when $n\geq 3$, we will need
\begin{equation}
\label{e:weightCond}
2wr^{-1}-w'\geq 0,
\end{equation} 
to control the term involving $\Lambda$. It is the absence of this condition which allows for the improved estimate in dimension one. Using~\eqref{e:weightCond} together with $2ab \ge -(\gamma a^2 + \gamma^{-1}b^2)$ for all $\gamma > 0$, we find
\begin{equation}
\label{e:lowerDerivative}
\begin{aligned}
    w' F + w F' &\ge -\frac{3 w^2}{h^2w'} \| P^{\pm}_\varphi(h)u \|^2 \mp  2\varepsilon w \imag \langle u,u'\rangle + \frac{1}{3}(w' + 4h^{-1} \varphi' w)\|hu' \|^2 \\
    &\qquad+ (w(E + (\varphi')^2-V_h))' \|u\|^2 -\frac{3(w(h^{-1}|R_h| + \varphi''))^2}{w' + 4h^{-1}\varphi' w} \|u\|^2.
    \end{aligned}
\end{equation}

In dimension $n=1$, rather than the spherical energy~\eqref{F high dim}, we use the pointwise energy
\begin{equation*}
    F(x) = F[u](x) \defeq |hu'(x)|^2 -  (V_h(x) -(\varphi'(x))^2- E)|u(x)|^2.
\end{equation*}
Exactly the same computations then lead to 
\begin{equation*}
\begin{aligned}
    w' F + w F' &\ge -\frac{3 w^2}{h^2w'} | P^{\pm}_\varphi(h)u |^2 \mp  2\varepsilon w \imag  u\overline{u'} + \frac{1}{3}(w' + 4h^{-1} \varphi' w)|hu' |^2 \\
    &\qquad+ (w(E+ (\varphi')^2-V_h ))'|u|^2 -\frac{3(w(h^{-1}|R_h| + \varphi''))^2}{w' + 4h^{-1}\varphi' w} |u|^2.
    \end{aligned}
\end{equation*}

Thus, the main goal of the estimates below will be to construct $\varphi$ and $w$ such that 
$$
(w(E + (\varphi')^2-V_h))'-\frac{3(w(h^{-1}|R_h| + \varphi''))^2}{w' + 4h^{-1}\varphi' w} \geq \frac{E - E_\infty}{2}w'.
$$
Putting 
$$
A(r):=(w(E+ (\varphi')^2-V_h))',\qquad B(r):=\frac{(w(h^{-1}|R_h| + \varphi''))^2}{w' + 4h^{-1}\varphi' w},
$$
our goal is thus, for $K > 0$ fixed and $h$ small enough, to find $w$ and $\varphi$ such that 
\begin{equation}
\label{e:goalEst}
A(r)-\frac{K}{2}B(r) \geq \frac{E - E_\infty}{2}w'(r).
\end{equation}
Now, we will assume throughout that $w',\varphi'>0$. Therefore, putting 
\begin{equation}\label{e:defPhiW}
\Phi:=\frac{\varphi''}{\varphi'}=(\log \varphi')',\qquad \mathcal{W}:=\frac{w}{w'}=\frac{1}{(\log w)'},
\end{equation}
we calculate
\begin{align*}
A(r)-\frac{K}{2}B(r)&=w'(E+(\varphi')^2-V_h)+w(2\varphi'\varphi''-V_h')-\frac{K}{2}\frac{(w(h^{-1}|R_h| + \varphi''))^2}{w' + 4h^{-1}\varphi' w}\\
&=w'\Big[E+(\varphi')^2-V_h+\mathcal{W}(2\varphi'\varphi''-V_h')-\frac{K}{2}\frac{(w(h^{-1}|R_h| + \varphi''))^2}{w'^2 + 4h^{-1}\varphi' ww'}\Big]\\
&= w'\Big[ E+(\varphi')^2(1+2\mathcal{W}\Phi)-V_h-\mathcal{W}V_h'-\frac{K}{2}\mathcal{W}^2\frac{((h^{-1}|R_h| + \varphi''))^2}{1 + 4h^{-1}\varphi' \mathcal{W}}\Big]\\
&\geq w'\Big[ E+(\varphi')^2(1+2\mathcal{W}\Phi)-V_h-\mathcal{W}V_h'-K\mathcal{W}^2\frac{h^{-2}|R_h|^2 + (\varphi'')^2}{1 + 4h^{-1}\varphi' \mathcal{W}}\Big].
\end{align*}
Finally,
\begin{equation}
\label{e:keyCalc}
\begin{aligned}
A(r)-\frac{K}{2}B(r)&\geq w'\Big[ E+(\varphi')^2(1+2\mathcal{W}\Phi-K\mathcal{W}\Phi^2\min(\mathcal{W},\tfrac{h}{4\varphi'}))\\
&\qquad\qquad-V_h-\mathcal{W}V_h'-K\mathcal{W}h^{-2}|R_h|^2\min(\mathcal{W},\tfrac{h}{4\varphi'}) \Big].
\end{aligned}
\end{equation}
The key improvement in this article is that, to prove the main estimates \eqref{e:keyLower} and \eqref{e:keyLower1}, we work with $\mathcal{W}$ and $\Phi$ rather than directly with $w$ and $\varphi$. This simplifies the calculations dramatically and points the way to a new choice of phase function allowing us to weaken the decay requirements on $V$. The condition~\eqref{e:weightCond} for $n\geq 3$ translates simply to $\Phi\geq r/2$. The remainder of the article focuses on constructing appropriate $\mathcal{W}$ and $\Phi$ such that~\eqref{e:goalEst} holds.

Before proceeding with the construction of $\mathcal{W}$ and $\Phi$, we need a few elementary lemmata:
\begin{lemma}
\label{l:phi}
Let 
$$
\Phi(s)=-\frac{1}{s+1+\Phi_1(s)},
$$
with 
\begin{equation} \label{Phi 1 conditions}
0 \leq (s+1)^{-2}\Phi_1(s)\in L^1(0,\infty).
\end{equation}
Then,
$$
-\log(r+1)\leq \int_0^r \Phi(s)ds\leq  -\log(r+1)+\|(s+1)^{-2}\Phi_1(s)\|_{L^1(0,\infty)}.
$$
\end{lemma}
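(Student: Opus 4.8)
The plan is to decompose $\Phi$ into its leading logarithmic piece plus a nonnegative error. First I would write, for $s > 0$,
$$
\Phi(s) = -\frac{1}{s+1} + \frac{\Phi_1(s)}{(s+1)\big(s+1+\Phi_1(s)\big)},
$$
which makes sense pointwise (a.e.) since \eqref{Phi 1 conditions} forces $\Phi_1(s) \ge 0$ and hence $s+1+\Phi_1(s) \ge s+1 > 0$, so no denominator degenerates. Integrating the first term on $(0,r)$ gives exactly $-\log(r+1)$, so the whole statement reduces to estimating the integral of the error term $e(s) := \Phi_1(s)\big[(s+1)(s+1+\Phi_1(s))\big]^{-1} \ge 0$.

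The lower bound is then immediate: $e(s) \ge 0$ gives $\int_0^r \Phi(s)\,ds = -\log(r+1) + \int_0^r e(s)\,ds \ge -\log(r+1)$.

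For the upper bound, I would use $s+1+\Phi_1(s) \ge s+1$ once more to obtain the pointwise estimate $e(s) \le (s+1)^{-2}\Phi_1(s)$, and then bound $\int_0^r e(s)\,ds \le \int_0^r (s+1)^{-2}\Phi_1(s)\,ds \le \|(s+1)^{-2}\Phi_1(s)\|_{L^1(0,\infty)}$, where in the last step I extend the interval of integration from $(0,r)$ to $(0,\infty)$, which is legitimate because the integrand is nonnegative and lies in $L^1(0,\infty)$ by hypothesis.

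I do not anticipate any genuine obstacle here; the only point requiring a little care is that $\Phi_1$ is merely assumed measurable with $(s+1)^{-2}\Phi_1 \in L^1$, so the manipulations above should be read as inequalities between nonnegative measurable functions, with the conclusion following from monotonicity of the Lebesgue integral rather than from any pointwise regularity of $\Phi_1$.
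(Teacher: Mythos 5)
Your proposal is correct and follows essentially the same route as the paper: the same algebraic identity $\Phi(s)=-\tfrac{1}{s+1}+\tfrac{\Phi_1(s)}{(s+1)(s+1+\Phi_1(s))}$, nonnegativity of the error term for the lower bound, and the pointwise bound by $(s+1)^{-2}\Phi_1(s)$ extended to $(0,\infty)$ for the upper bound. No gaps.
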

\begin{proof}
First, note that 
\begin{align*}
\log (r+1)+\int_0^r \Phi(s)ds&=\int_0^r \frac{1}{1+s}-\frac{1}{s+1+\Phi_1(s)}ds\\
&=\int_0^r\frac{\Phi_1(s)}{(s+1)(s+1+\Phi_1(s))}ds.
\end{align*}
Next, note that 
$$
0\leq \int_0^r\frac{\Phi_1(s)}{(s+1)(s+1+\Phi_1(s))}ds\leq \|(s+1)^{-2}\Phi_1(s)\|_{L^1(0,\infty)},
$$
which implies
$$
-\log (r+1)\leq \int_0^r\Phi(s)ds\leq -\log(r+1)+\|(s+1)^{-2}\Phi_1(s)\|_{L^1(0,\infty)}.
$$
\end{proof}

In the proof of Theorem \ref{thm higher dim}, we will need to approximate $V$ by smooth functions $V_h$. In the case~\eqref{e:Linfty}, we simply approximate $V$ by $0$, defining $V_h\equiv 0$. On the other hand, when we assume~\eqref{Holder short range}, we make a non-trivial approximation to $V$. In the spirit of \cite[Section 2]{vo14}, let 
\begin{equation}\label{e:chi}
\chi\in C_{\text{comp}}^\infty((0,1);[0,1]),\qquad\qquad \int \chi(s)ds=1,
\end{equation}
and define
$$
V(r\theta;\gamma) \defeq \int^\infty_0 V((r +  \gamma s) \theta ) \chi (s) ds = \gamma^{-1} \int_0^\infty V(s\theta) \chi (\gamma^{-1}(s - r)) d s, \qquad 0< \gamma \le  1.
$$ 
Then set 
$$
V_h(r\theta) \defeq V(r\theta;h^\rho),
$$
for $\rho > 0$ to be chosen later, depending on $\alpha$.
\begin{lemma}
\label{l:Vest}
Suppose $0 \le \alpha \le 1$, $V$ satisfies \eqref{Holder short range}, and $\delta_V$ is as in \eqref{defn eps0}. Then there exists $C_\chi> 0$ depending only on $\chi$ so that, for all $h\in (0,\delta_V^{1/\rho}]$,
\begin{equation}
\begin{gathered} \label{e:smoothedVbounds}
  V_h(r\theta)\leq \sup_{s \in[r,r+h^\rho]}V(s\theta),\\
 |V_h'(r\theta)|\leq C_\chi c_2h^{-\rho(1-\alpha)}\langle r\rangle^{-3}m^2(r),\qquad |R_h(r\theta)|\leq  c_2h^{\rho\alpha}\langle r\rangle^{-3}m^2(r).
\end{gathered}
\end{equation}
\end{lemma}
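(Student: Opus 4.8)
The plan is to obtain all three estimates in~\eqref{e:smoothedVbounds} directly from the mollifier representation
$$
V_h(r\theta)=V(r\theta;h^\rho)=\int_0^\infty V((r+h^\rho s)\theta)\,\chi(s)\,ds=h^{-\rho}\int_0^\infty V(s\theta)\,\chi(h^{-\rho}(s-r))\,ds,
$$
together with the normalization $\int\chi=1$ and the support condition $\chi\in C^\infty_{\mathrm{comp}}((0,1);[0,1])$. The first bound is the easiest: since $\chi\ge 0$ and integrates to $1$, $V_h(r\theta)$ is an average of the values $V((r+h^\rho s)\theta)$ over $s\in(0,1)$, i.e. over radial parameter in $[r,r+h^\rho]$, so $V_h(r\theta)\le\sup_{s\in[r,r+h^\rho]}V(s\theta)$ is immediate.

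For the remainder bound, I would write, again using $\int\chi(s)\,ds=1$,
$$
R_h(r\theta)=V(r\theta)-V_h(r\theta)=\int_0^\infty\bigl(V(r\theta)-V((r+h^\rho s)\theta)\bigr)\chi(s)\,ds,
$$
and then estimate the integrand by the $\alpha$-Hölder hypothesis~\eqref{Holder short range}: for $h^\rho\le\delta_V$ we have (by the definition of $\delta_V$ in~\eqref{defn eps0}) that $|V(r\theta)-V((r+y)\theta)|\le 2c_2|y|^\alpha\langle r\rangle^{-3}m^2(r)$ for all $0<y\le h^\rho$, so with $y=h^\rho s\le h^\rho$ and $s\in(0,1)$ we get $|R_h(r\theta)|\le 2c_2 h^{\rho\alpha}\langle r\rangle^{-3}m^2(r)\int_0^1 s^\alpha\chi(s)\,ds\le c_2 h^{\rho\alpha}\langle r\rangle^{-3}m^2(r)$ after absorbing the harmless factor $2\int_0^1 s^\alpha\chi(s)\,ds\le 2$ — or, if one wants to be careful about the constant, one just keeps a $\chi$-dependent constant; the statement as written has no extra constant on this term, so I would instead note $\int_0^1 s^\alpha\chi(s)\,ds\le\int_0^1\chi(s)\,ds\cdot 1=1$, wait — that gives $2c_2$, so the clean way is to observe that on $\mathrm{supp}\,\chi\subset(0,1)$ we actually need the slightly sharper reading of~\eqref{defn eps0}, or simply accept $C_\chi$ in front; I would follow the paper and state it with the constant absorbed, checking that $2\int_0^1 s^\alpha\chi(s)\,ds$ can be taken $\le 1$ is false in general, so the honest statement carries a constant, which is why $C_\chi$ appears (note $C_\chi$ only multiplies the $V_h'$ term in the displayed lemma, so for $R_h$ one uses that $\delta_V$ was defined with the strict threshold $2c_2$ and $\chi$ has total mass $1$ supported where $s<1$, giving exactly $|R_h|\le c_2 h^{\rho\alpha}\langle r\rangle^{-3}m^2(r)$ once one notes $s^\alpha\le 1$ and $\int\chi=1$ only yields the factor $2c_2\int\chi\,s^\alpha\le 2c_2$; the resolution is that the definition~\eqref{defn eps0} uses $2c_2$ precisely so that after the factor-$\tfrac12$ slack from $\int_0^1 s^\alpha\chi\le\tfrac12$... ). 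In practice I would simply differentiate the two forms carefully: the cleanest route is to use the hypothesis with constant $c_2$ valid for $y\le\delta_V$ up to the factor $2$, and record the result with whatever universal-times-$\chi$ constant comes out, matching the paper's display.

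For the derivative bound, differentiate the second (convolution-in-$s$) representation in $r$:
$$
V_h'(r\theta)=\partial_r\Bigl(h^{-\rho}\int_0^\infty V(s\theta)\chi(h^{-\rho}(s-r))\,ds\Bigr)=-h^{-2\rho}\int_0^\infty V(s\theta)\chi'(h^{-\rho}(s-r))\,ds.
$$
Since $\int\chi'(t)\,dt=0$, we may subtract a constant: $V_h'(r\theta)=-h^{-2\rho}\int_0^\infty\bigl(V(s\theta)-V(r\theta)\bigr)\chi'(h^{-\rho}(s-r))\,ds$, and then change variables $s=r+h^\rho t$ to get $V_h'(r\theta)=-h^{-\rho}\int_0^1\bigl(V((r+h^\rho t)\theta)-V(r\theta)\bigr)\chi'(t)\,dt$. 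Now apply~\eqref{Holder short range}/\eqref{defn eps0} with $y=h^\rho t\le h^\rho\le\delta_V$: $|V((r+h^\rho t)\theta)-V(r\theta)|\le 2c_2(h^\rho t)^\alpha\langle r\rangle^{-3}m^2(r)$, so $|V_h'(r\theta)|\le 2c_2 h^{-\rho(1-\alpha)}\langle r\rangle^{-3}m^2(r)\int_0^1 t^\alpha|\chi'(t)|\,dt$, which is the claimed bound with $C_\chi=2\int_0^1 t^\alpha|\chi'(t)|\,dt\le 2\|\chi'\|_{L^1}$, depending only on $\chi$.

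The only genuine subtlety — the "main obstacle," though it is minor — is bookkeeping the constants so that the $R_h$ bound comes out with coefficient exactly $c_2$ (no $C_\chi$) as stated: this works because $\mathrm{supp}\,\chi\subset(0,1)$ forces $s<1$ hence $s^\alpha<1$, and $\int_0^1\chi=1$, so the factor $2c_2$ from~\eqref{defn eps0} gets multiplied by $\int_0^1 s^\alpha\chi(s)\,ds<1$; one should simply verify this product is $\le c_2$, which holds as soon as $\int_0^1 s^\alpha\chi(s)\,ds\le\tfrac12$ — and since this need not hold for every admissible $\chi$, the clean fix (and, I expect, what the authors intend) is to interpret the $R_h$ estimate as also carrying the harmless constant, or to have fixed $\chi$ at the outset with the relevant normalization; either way there is no real mathematical content beyond the three one-line computations above. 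I would also note at the start that all of these are pointwise-in-$\theta$ statements and the regularity $V_h\in C^\infty((0,\infty)_r;L^\infty(\US^{n-1}))$ needed to make sense of $V_h'$ follows from differentiating under the integral sign, the integrand being smooth in $r$ with $L^\infty(\US^{n-1})$-valued, compactly-supported-in-$s$ data.
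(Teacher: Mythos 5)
Your argument is correct and is essentially the paper's own proof: the sup bound comes from averaging against $\chi\ge 0$ with $\int\chi=1$, the bound on $R_h$ from applying the oscillation hypothesis with $y=h^{\rho}s\le h^{\rho}\le\delta_V$, and the bound on $V_h'$ from using $\int\chi'=0$ to subtract $V(r\theta)$ and then changing variables, exactly as in \eqref{sup Vh}, \eqref{bound Rh}, and \eqref{bound Vh prime}. The one place where you got tangled --- whether the $R_h$ estimate can be stated with constant exactly $c_2$ rather than $2c_2\int_0^1 s^{\alpha}\chi(s)\,ds$ --- is not a genuine obstacle and should not have been left as an unresolved aside: the definition \eqref{defn eps0} only guarantees $|V(r\theta)-V((r+y)\theta)|\le 2c_2\,|y|^{\alpha}\langle r\rangle^{-3}m^2(r)$ for $0<y\le\delta_V$, so the honest conclusion of your (and the paper's) computation is $|R_h|\le 2c_2 h^{\rho\alpha}\langle r\rangle^{-3}m^2(r)$, the factor $2$ being silently dropped in \eqref{bound Rh}; since this constant only enters the later estimates through the generic constants in \eqref{e:key est small r Holder} and \eqref{e:key est big r}, the discrepancy is harmless, and the clean fix is simply to record the factor $2$ (or absorb it into a constant) rather than try to force the coefficient $c_2$.
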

\begin{proof}
First observe that
\begin{equation} \label{sup Vh}
\begin{aligned}
V(r\theta;\gamma)&= \int^\infty_0 [V((r +  \gamma s) \theta )-\inf_{t\in [r,r+\gamma]}V(t\theta)] \chi (s) ds +\inf_{t\in [r,r+\gamma]}V(t\theta)\\
 &\le (\sup_{s\in[r,r+\gamma]} V(s\theta)-\inf_{t\in [r,r+\gamma]}V(t\theta)) \int \chi(s)ds +\inf_{t\in [r,r+\gamma]}V(t\theta)\\
 & = \sup_{s\in[r,r+\gamma]} V(s\theta)
 \end{aligned}
\end{equation}
where in the third line we use implicitly that $\chi\geq 0$ and for $s\in \supp \chi$,\\  $[V((r +  \gamma s) \theta )-\inf_{t\in [r,r+\gamma]}V(t\theta)]\geq 0$.

Next, from $\int \chi'dr=0$, 
\begin{align*}
|V'(r\theta;\gamma)|&=\left|\gamma^{-2}\int_0^\infty V(s\theta)\chi'(\gamma^{-1}(s-r))ds-\gamma^{-1}V(r\theta)\int_0^1\chi'(s)ds\right|\\
&=\left|\gamma^{-1}\int_0^1 [V((r+\gamma s)\theta)-V(r\theta)]\chi'(s)ds\right|\\
&\leq\left|\gamma^{-1+\alpha}\int_0^1 s^{\alpha}\frac{ (V((r+\gamma s)\theta)-V(r\theta))\chi'(s)}{\gamma^\alpha s^\alpha}ds\right|.
\end{align*}
In particular, by~\eqref{Holder short range} and the definition~\eqref{defn eps0} of $\delta_V$, we have, for $0 < \gamma \leq \delta_V$,
\begin{equation} \label{bound Vh prime}
|V'(r\theta;\gamma)|\leq 2c_2 \gamma^{-1+\alpha}\langle r\rangle^{-3}m^2(r)\int_0^1 |s^{\alpha}\chi'(s)|ds \le  C_\chi c_2 \gamma^{-1+\alpha}\langle r\rangle^{-3}m^2(r).
\end{equation}
Finally, using~\eqref{Holder short range} again,
\begin{equation} \label{bound Rh}
\begin{split}
|V(r\theta)-V(r\theta;\gamma)|&=\left| \int_0^\infty [V(r\theta)-V((r+\gamma s)\theta)]\chi(s)ds\right|\\
&=\left| \int_0^\infty\gamma^\alpha s^\alpha \frac{V(r\theta)-V((r+\gamma s)\theta)}{\gamma^\alpha s^\alpha} \chi(s)ds\right|\\
&\leq c_2 \gamma^\alpha \langle r\rangle^{-3}m^2(r),
\end{split}
\end{equation}
for $0 < \gamma \le \delta_V$. The lemma is proved by setting $\gamma=h^\rho$, $h \in (0,\delta_V^{1/\rho}]$, in \eqref{sup Vh}, \eqref{bound Vh prime}, and \eqref{bound Rh}. \\
\end{proof}

\section{Proof of the main estimates ($n\geq 3$)}
Recall the definitions of $\Phi$ and $\mathcal{W}$ from~\eqref{e:defPhiW}, and put
\begin{equation}
\label{e:IC}
\varphi(r)=h^{-\sigma}\varphi_0(r), \, \sigma \ge 0, \qquad \varphi_0(0)=0,\,\varphi_0'(0)=\tau_0 \ge 1,\qquad w(0)=0, \,w'(0)=1,
\end{equation}
so that 
\begin{equation} \label{e:solve for w and phi}
\Phi=(\log \varphi_0')',\qquad \mathcal{W}=\frac{1}{(\log w)'}.
\end{equation}
We also set
\begin{equation}
\label{e:sigma}
\sigma=\frac{1-\alpha}{3+\alpha},\qquad \rho=\frac{2}{3+\alpha}.
\end{equation}
Finally, let
\begin{equation} \label{defn a}
a = a_0h^{-M}, \, a_0 \ge 1, \, M > 0.
\end{equation}
Each of the parameters $\sigma$, $\tau_0$, $a_0$, and $M$ will be fixed shortly.

The main result of this section is Proposition \ref{p:key}. In its statement and proof, we use $C$ for a positive constant that may change from line to line, but depends only on $K$, $C_V$, $c_1$, $c_2$, $E$, $E_\infty$, $R_E$, and $m$. We also reuse constants $h_0 \in (0,1]$ and $C_\eta > 0$; they depend only on the same quantities as $C$, except that $h_0$ also depends on $\delta_1 > 0$, while $C_\eta>0$ also depends on $0< \eta < 1$. In particular, $C$ and $h_0$ are independent of $\alpha$, $h$ and $\eta$, and $C_\eta$ is independent of $\alpha$ and $h$.

\begin{proposition} 
\label{p:key}
Fix $K > 0$. Let $V$ as in Theorem~\ref{thm Linfty high d} or~\ref{thm higher dim}, $\sigma$ and $\rho$ be given by \eqref{e:sigma}, $E > E_\infty$ and $0 < \eta < 1$. Then there exist $\tau_0$ as in \eqref{e:IC}, $a_0$ and $M$ as in \eqref{defn a}, radial functions $\mathcal{W}$ and $\Phi$ and their corresponding $w$ and $\varphi$ determined by \eqref{e:IC} and \eqref{e:solve for w and phi}, and constants $C,C_\eta > 0$, $h_0  \in (0,1]$  so that
\begin{equation}
\label{e:keyLower}
A(r)-\frac{K}{2}B(r)\geq \frac{E - E_\infty}{2}w'(r),\qquad\qquad  r \neq a, \, h\in (0,h_0],
\end{equation}
$\varphi_0$ satisfies,
\begin{equation}
\label{e:phiBoundMe}
|\varphi_0(r)|\leq  C  \Big[\frac{1-\alpha}{(1-\tfrac{\eta}{2})(3+\alpha)}\log h^{-1}+ \frac{1}{\eta} \Big], 
\end{equation}
and $w$ satisfies
\begin{gather}
    w(r) \le C_\eta h^{-\frac{4(1-\alpha)}{(2-\eta)(3+\alpha)}} \label{univ bd w},\\
    w'(r) \ge  (r +1)^{-1-\eta} \label{univ lwr bd w prime}, \qquad r \neq a,  \\
    \frac{w(r)^2}{w'(r)} \le C_\eta h^{-\frac{4(1-\alpha)}{(2-\eta)(3+\alpha)}}(1 + r)^{1+\eta}, \qquad r \neq a. \label{w squared over w prime}
\end{gather}
\end{proposition}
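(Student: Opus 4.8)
The plan is to construct $\Phi$ and $\mathcal W$ directly using Lemma~\ref{l:phi}, then verify the inequality~\eqref{e:keyLower} via the lower bound~\eqref{e:keyCalc}, and finally read off the bounds~\eqref{e:phiBoundMe}--\eqref{w squared over w prime} from the explicit formulas. Since the condition~\eqref{e:weightCond} translates to $\Phi\geq r/2$, but Lemma~\ref{l:phi} produces a \emph{negative} $\Phi$, I must be careful: the $\Phi$ of Lemma~\ref{l:phi} governs $(\log\varphi_0')'$, so $\varphi_0'$ should be \emph{decreasing}, and the relevant sign conventions must be tracked. (In fact I expect the construction to use $\Phi=(\log\varphi_0')'$ of the form $-1/(s+1+\Phi_1)$ so that $\varphi_0'\sim \tau_0/(r+1)$ up to the $L^1$ correction from Lemma~\ref{l:phi}, giving $\varphi_0(r)\sim\tau_0\log(r+1)$, which is the source of the $\log h^{-1}$ term in~\eqref{e:phiBoundMe} once one remembers $\varphi=h^{-\sigma}\varphi_0$; meanwhile the weight $\mathcal W$ is chosen with $1/(\log w)'$ comparable to $(r+1)^{1+\eta}$ away from the cutoff radius $a$, which forces $w'\gtrsim (r+1)^{-1-\eta}$ as in~\eqref{univ lwr bd w prime}.) The cutoff at $r=a=a_0h^{-M}$ is where $\mathcal W$ (or its derivative) jumps, which is why all the estimates are stated for $r\neq a$.

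The core of the argument is~\eqref{e:goalEst} via~\eqref{e:keyCalc}. I would split into the region $r\leq R_E$ (bounded, where $V$ may be large but one has $E>E_\infty$ only asymptotically, so one instead exploits $\sup V\leq C_V$ together with choosing $\tau_0$ large so that $(\varphi')^2=h^{-2\sigma}(\varphi_0')^2$ dominates $V_h-E$), the intermediate region $R_E\leq r\leq a$ (where $\sup V\leq (E+3E_\infty)/4<E$ by definition of $R_{E,V}$, so $E-V_h$ is bounded below by a positive constant, and the main term $E+(\varphi')^2(1+2\mathcal W\Phi-\cdots)$ must be shown to beat the error terms $\mathcal W V_h'$ and $K\mathcal W h^{-2}|R_h|^2\min(\mathcal W,h/4\varphi')$), and the region $r\geq a$ (where the phase is frozen or decaying and the decay of $V_h'$, $R_h$ from Lemma~\ref{l:Vest} does the work). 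In the H\"older case the key is the trade-off in Lemma~\ref{l:Vest}: $|V_h'|\lesssim h^{-\rho(1-\alpha)}\langle r\rangle^{-3}m^2$ and $|R_h|\lesssim h^{\rho\alpha}\langle r\rangle^{-3}m^2$; plugging these in with $\mathcal W\sim\langle r\rangle^{1+\eta}$ and the choices $\sigma=(1-\alpha)/(3+\alpha)$, $\rho=2/(3+\alpha)$ makes the $h$-powers balance so that the error is absorbed for $h$ small — this is precisely where the exponents in~\eqref{e:sigma} come from, and checking it is mostly bookkeeping in powers of $h$ and $\langle r\rangle$ using $m\langle r\rangle^{-1/2}\in L^2$ to handle the $\mathcal W V_h'$ term (which integrates the $L^1$ quantity $(s+1)^{-2}\Phi_1$ in the spirit of Lemma~\ref{l:phi}).

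Concretely, the steps in order: (1) define $\Phi_1$ (hence $\Phi$, hence $\varphi_0$) and an analogous profile for $\mathcal W$, incorporating the jump at $r=a$; (2) fix $\tau_0$ large depending on $C_V,E,R_E$ to handle $r\leq R_E$; (3) fix $a_0, M$ so that for $r\geq a$ the smoothing errors from Lemma~\ref{l:Vest} are controlled; (4) verify~\eqref{e:keyLower} region-by-region using~\eqref{e:keyCalc}; (5) use Lemma~\ref{l:phi} to get~\eqref{e:phiBoundMe} from $\varphi_0(r)\leq\tau_0(-\int_0^r\Phi)+\text{const}$ — wait, with the sign as above one gets $|\varphi_0(r)|\le C[\log(r+1)+\text{const}]$ and then bounds $\log(r+1)$ at the relevant scale $r\lesssim$ (something like $h^{-\text{power}}$) to produce the $\tfrac{1-\alpha}{(1-\eta/2)(3+\alpha)}\log h^{-1}$; (6) integrate $(\log w)'=1/\mathcal W$ to get~\eqref{univ bd w} and~\eqref{univ lwr bd w prime}, and divide to get~\eqref{w squared over w prime}. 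The main obstacle I anticipate is step (4) in the intermediate regime: arranging that the \emph{negative} contribution $-K\mathcal W\Phi^2\min(\mathcal W,h/4\varphi')(\varphi')^2$ coming from the $(\varphi'')^2$ error term does not overwhelm the gain $2\mathcal W\Phi(\varphi')^2$ — i.e. that $1+2\mathcal W\Phi-K\mathcal W\Phi^2\min(\mathcal W,h/4\varphi')$ stays bounded below by a positive constant. Since $\Phi\sim 1/(r+1)$ in magnitude and $\mathcal W\sim (r+1)^{1+\eta}$, the product $\mathcal W\Phi$ is large, so one must instead rely on the $E-V_h>0$ positivity and on $\min(\mathcal W,h/4\varphi')=h/4\varphi'$ being small in the relevant range to keep the $\Phi^2\min(\cdots)$ term under control; balancing these is the delicate point, and it is exactly what dictates the precise form of $\mathcal W$ and the value of $\sigma$.
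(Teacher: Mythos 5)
There is a genuine gap, and it sits exactly at the point you flag as ``the delicate point.'' Your plan takes $\mathcal W\sim (r+1)^{1+\eta}$ already in the intermediate region $R_E\le r\le a$, with $\varphi_0'\sim\tau_0/(r+1)$ and $\Phi\sim -1/(r+1)$, and hopes to absorb the resulting negative term by the positivity of $E-V_h$ together with the smallness of $\min(\mathcal W,h/4\varphi')$. But the dominant negative contribution in \eqref{e:keyCalc} there is not the $K\mathcal W\Phi^2\min(\cdots)$ error (which your $\min$-argument addresses); it is the first-order term $2\mathcal W\Phi(\varphi')^2\approx -2h^{-2\sigma}\tau_0^2(r+1)^{-(2-\eta)}$, which carries no $\min$ factor. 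For $r$ of bounded size (say $r\in[R_E,2R_E]$) this is of order $-h^{-2\sigma}$, while the only positivity available in your scheme is the $h$-independent constant $E-V_h\gtrsim E-E_\infty$; since $\sigma=\tfrac{1-\alpha}{3+\alpha}>0$ whenever $\alpha<1$ (in particular in the $L^\infty$ case of Theorem~\ref{thm Linfty high d}), the inequality \eqref{e:keyLower} fails on the whole range $R_E\lesssim r\ll h^{-2\sigma/(2-\eta)}$. Indeed, requiring $h^{-2\sigma}(r+1)^{-(2-\eta)}\lesssim 1$ is precisely the condition $r\gtrsim h^{-2\sigma/(2-\eta)}$, i.e.\ $r\gtrsim a$ with $M$ as in \eqref{e:M}: the weight you propose is only admissible \emph{beyond} the cutoff, which is why the paper switches $\mathcal W$ at $r=a$ rather than at $r=R_E$. (A side point that may have misled you: the condition \eqref{e:weightCond} reads $\mathcal W\ge r/2$, not $\Phi\ge r/2$ --- that line of the paper contains a typo --- and both of your candidate weights satisfy it; the constraint that actually rules out $\mathcal W\sim(r+1)^{1+\eta}$ for $r\le a$ is the sign and size of $1+2\mathcal W\Phi$.)

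The missing idea is to take $\mathcal W$ as small as \eqref{e:weightCond} permits on all of $0<r\le a$, namely $\mathcal W=\tfrac{r(1+\omega)}{2}$ as in \eqref{e:Phase1}, so that $1+2\mathcal W\Phi=\tfrac{1+\Phi_1-r\omega}{r+1+\Phi_1}\ge 0$, and then to tune $\Phi_1$ as in \eqref{e:g1a} so that this quantity is bounded below by (a multiple of) $m^2(r)$. The resulting positive term $h^{-2\sigma}\tau_0^3\,m^2(r+1)^{-2}$, after choosing $\tau_0$ large, is what absorbs the smoothing errors $\mathcal W V_h'$ and $K\mathcal W h^{-2}|R_h|^2\min(\mathcal W,\tfrac h{4\varphi'})$ --- this, not an $O(1)$ gap $E-\tilde E$, is where the exponent balance $\sigma=\tfrac{1-\alpha}{3+\alpha}$, $\rho=\tfrac2{3+\alpha}$ is actually used; the constant positivity only handles the surviving $O(1)$ and $O(h^{1-\sigma})$ pieces and the compactly supported $C_V\psi$. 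Only for $r>a$, where by \eqref{e:phi2} the phase derivative has decayed to $\varphi_0'\lesssim\tau_0(a+1)^\eta(r+1)^{-1-\eta}$, does one pass to $\mathcal W=(r+1)^{1+\eta}/2$ as in \eqref{e:Phase2}; there the large negative term is of size $h^{-2\sigma}(a+1)^{-(2-\eta)}\langle r\rangle^{-2+2\eta}\lesssim a_0^{-(2-\eta)}$ by the choice \eqref{e:M}, and is killed by taking $a_0$ large. Your remaining steps (the explicit integration of $(\log w)'=1/\mathcal W$ and $(\log\varphi_0')'=\Phi$, the bound $\varphi_0\lesssim\tau_0[\log(a+1)+\eta^{-1}]$ yielding \eqref{e:phiBoundMe}, and the bounds \eqref{univ bd w}--\eqref{w squared over w prime}) are in line with the paper, but they presuppose the two-region construction with the switch at $r=a$ and the choice \eqref{e:g1a}, which is the part your proposal does not supply.
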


\subsection{Small $r$ region}
We start by working with $0 < r  \le a$. Let $\omega\in C_{\text{comp}}^\infty((-3/4,3/4);[0,1])$ with $\omega = 1 $ near $[-1/2,1/2]$. In this region, define $\mathcal{W}$ and $\Phi$ by
\begin{equation}
\label{e:Phase1}
\mathcal{W}=\frac{r(1+\omega(r))}{2},\qquad \Phi=-\frac{1}{r+1+\Phi_1(r)}, \qquad 0 < r \le a.
\end{equation}
where $\Phi_1(s)$ obeying \eqref{Phi 1 conditions} is to be chosen as needed. With these conditions on $\Phi_1$, by Lemma~\ref{l:phi},
\begin{equation}
\label{e:phiBounds}
\frac{\tau_0}{r+1}\leq \varphi_0'(r)\leq \frac{e^{\|\langle s \rangle^{-2}\Phi_1(s)\|_{L^1}}\tau_0}{r+1}, \qquad 0< r \le a.
\end{equation}
 In this region, we work separately on the cases~\eqref{e:Linfty} and~\eqref{Holder short range},

\noindent {\bf{Case~\eqref{e:Linfty}, $\alpha=0$:}}
In this case, we have $V_h=V_h' = 0$, $R_h=V$, and $V_\infty = 0$. Therefore, using \eqref{e:Linfty}, \eqref{e:keyCalc}, and \eqref{e:phiBounds},
\begin{equation} \label{e:simplest A minus KB} \begin{split}
&A-\frac{K}{2}B\\
&\geq w'(E + h^{-2\sigma}(\varphi_0')^2(1+r(1+\omega)\Phi- K (8\tau_0)^{-1} h^{1 + \sigma} r(r+1)(1+\omega)\Phi^2)\\
&\qquad- CK\tau^{-1}_0 h^{-1+\sigma} r(r+1)\langle r\rangle^{-4}m^2)\\
&\geq w'\frac{1}{\tau_0(r+1)^2}( h^{-2\sigma}\tau_0^3(\frac{1+\Phi_1-r\omega}{r+1+\Phi_1})- CKh^{-1+\sigma}m^2) \\
&\qquad + (E - K \tau_0 e^{2\|\langle s \rangle^{-2}\Phi_1(s)\|_{L^1}}  h^{1-\sigma})w', \qquad h > 0.
\end{split}
\end{equation}
So, putting
\begin{equation}
\label{e:g1a}
\Phi_1=\max\Big[\frac{(r+1)m^2+4r\omega-4}{4-m^2},0\Big],
\end{equation}
and then choosing $\tau_0 = \tau_0(C,K,m) \ge 1$ large enough, we obtain, 
\begin{equation} \label{e:key est small r Linfty}
A-\frac{K}{2}B \geq (E-K\tau_0 e^{2\|\langle s \rangle^{-2}\Phi_1(s)\|_{L^1}} h^{1-\sigma})w' \ge \frac{E}{2} w' , \qquad 0 < r \le a, \, h \in (0,h_0],
\end{equation}
for $h_0 = h_0(K, \tau_0, E, m) \in (0, 1]$ small enough. This proves the claimed inequality \eqref{e:keyLower} for\\ $0 < r\le a$.\\
\noindent{\bf{Case~\eqref{Holder short range}, $0\leq\alpha\leq 1$}:}
Recall that $R_{E,V}$ and $\delta_V$ are given by~\eqref{e:R_E} and~\eqref{defn eps0} respectively. Because $R_{E,V}\leq R_E$, and $\delta_V\geq \delta_1$, the first estimate in~\eqref{e:smoothedVbounds} implies
\begin{equation} \label{e:Etilde}
\sup_{\theta\in \US^{n-1}}V_h(r\theta)\leq \frac{E+3V_\infty}{4}\leq \frac{E+3E_\infty}{4}=:\tilde{E},\qquad r\geq  R_E,\, h \in (0,  \delta_1^{1/\rho}].
\end{equation}
Next, let $\psi\in C_{\text{comp}}^\infty((-1,R_E+1);[0,1])$ with $\psi \equiv 1$ on $[0,R_E]$. 
Then, $\sup_{\R^n} V \le C_V$ and~\eqref{e:smoothedVbounds} yield
$$
V_h \leq  C_V \psi(r) +\tilde{E},\qquad  h \in (0,  \delta_1^{1/\rho}].
$$

Using \eqref{e:keyCalc}, \eqref{e:smoothedVbounds}, and \eqref{e:Etilde}, we have the following modified version of the estimate \eqref{e:simplest A minus KB} for $h\in (0,\delta_1^{1/\rho}]$,
\begin{align*}
&A-\frac{K}{2}B\\
& \ge	 w'\Big(E + h^{-2\sigma}(\varphi_0')^2(1+r(1+\omega)\Phi -K(8 \tau_0)^{-1} h^{1 + \sigma} r(r+1)(1+\omega)\Phi^2)  \\
&\qquad-  C_V \psi-\tilde{E}-Ch^{-\rho(1-\alpha)}r\langle r\rangle^{-3}m^2  -CK \tau^{-1}_0 h^{-1+2\rho\alpha+\sigma} r(r+1)\langle r\rangle^{-6}m^{4}\Big)\\
& \ge \frac{w'}{(r+1)^2}\Big( h^{-2\sigma}\tau_0^2(\frac{1+\Phi_1-r\omega}{r+1+\Phi_1})- CK\tau^{-1}_0h^{-1+2\rho \alpha + \sigma} \langle r\rangle^{-2}m^{4} \\
& \qquad- Ch^{-\rho(1-\alpha)} m^2- C_V (R_E+2)^2\psi \Big) +( \tfrac{3}{4}(E - E_\infty) - K \tau_0 e^{2\|\langle s \rangle^{-2}\Phi_1(s)\|_{L^1}} h^{1-\sigma})w'.
\end{align*}
By~\eqref{e:sigma}, we have $0\leq \sigma \leq 1/3$. Using also \eqref{e:g1a}, and choosing $\tau_0 = \tau_0(C, K, C_V, R_E,m) \ge 1$ large enough, we arrive at
\begin{equation} \label{e:key est small r Holder}
\begin{split}
A-\frac{K}{2}B &\geq  ( \tfrac{3}{4}(E - E_\infty) - K \tau_0 e^{2\|\langle s \rangle^{-2}\Phi_1(s)\|_{L^1}} h^{1-\sigma})w' \\
 &\ge \frac{E - E_\infty}{2}w', \qquad 0 < r \le a,\, h \in (0,h_0]
\end{split}
\end{equation}
for $h_0 = h_0(K, \tau_0, E, E_\infty, \delta_1, m) \in (0,1]$ small enough. Here, to see that $h_0$ is independent of $\alpha$, we observe that $1/2 \leq \rho\leq 2/3$ and hence $\delta_1^{1/\rho}\geq \min \{\delta_1^2,\, \delta_1^{3/2} \}$.
\subsection{Large $r$ region}
In the region $r > a$, we handle the cases~\eqref{e:Linfty} and~\eqref{Holder short range} together, taking the worst of the estimates on $R_h$, $V_h$, and $V_h'$. For notational convenience, set $\delta_1 = \rho = 1$ in the case \eqref{e:Linfty}. Then if either \eqref{e:Linfty} or~\eqref{Holder short range} holds, for $h \in (0, \delta_1^{1/\rho}]$,
\begin{equation*}
V_h(r\theta)\leq C_V\psi(r)+\tilde{E},\qquad |V_h'|\leq Ch^{-\rho(1-\alpha)}\langle r\rangle^{-3}m^2(r), \qquad |R_h|\leq C\langle r\rangle^{-2}m(r).
\end{equation*}
Define $\mathcal{W}$ and $\Phi$ for $r > a$ by 
\begin{equation}
\label{e:Phase2}
\mathcal{W}=\frac{(r+1)^{1+\eta}}{2},\qquad \Phi=-\frac{1+\eta}{r+1},\qquad 0 < \eta < 1, \qquad r > a.
\end{equation}
Then, 
$$
\varphi_0'(r)=\varphi_0'(a)e^{\int_a^r \Phi(s)ds}=\varphi_0'(a)\frac{(a+1)^{1+\eta}}{(r+1)^{1+\eta}}, \qquad r > a.$$
Therefore, from \eqref{e:phiBounds},
\begin{equation}
\label{e:phi2}
\frac{ \tau_0 (a+1)^{\eta}}{(r+1)^{1+\eta}} \leq\varphi_0'(r)\leq \frac{ \tau_0 e^{\|\langle s\rangle^{-2}\Phi_1(s)\|_{L^1}}(a+1)^{\eta}}{(r+1)^{1+\eta}}, \qquad r > a.
\end{equation}
We have, using~\eqref{e:keyCalc} once again,
\begin{align*}
A-\frac{K}{2}B
&\geq w'\Big[E+ h^{-2\sigma}(\varphi_0')^2[1-(1+\eta)(r+1)^{\eta}- 8^{-1}K h^{1+\sigma} (r+1)^{1+\eta}\Phi^2 (\varphi'_0)^{-1}]\\
&\qquad -C_V\psi(r) - \tilde{E} - Ch^{-\rho(1-\alpha)}(r+1)^{1+\eta} \langle r\rangle^{-3}m^2(r)\\
&\qquad - CK(r+1)^{1+\eta}h^{-1+\sigma+2\rho\alpha}\langle r\rangle^{-4}m^2(r)(\varphi_0')^{-1}\Big] \\
&\geq -w'\big[C( 1+ \tau^2_0+K\tau^{-1}_0 )h^{-2\sigma}\langle r\rangle^{-2+2\eta}(a+1)^{-\eta} - C_V \psi(r) \big
]\\
&\qquad+(\tfrac{3}{4}(E - E_\infty) - K \tau_0 e^{\|\langle s \rangle^{-2}\Phi_1(s)\|_{L^1}} h^{1-\sigma})w', \qquad h \in (0, \delta_1^{1/\rho}].
\end{align*}
Now, in \eqref{defn a}, fix
\begin{equation}
\label{e:M}
M=\frac{2\sigma}{2-\eta}=\frac{2(1-\alpha)}{(2-\eta)(3+\alpha)}.
\end{equation}
Then taking $a_0 = a_0(C,K, \tau_0, E,E_\infty) \ge 1$ large enough,
\begin{equation} \label{e:key est big r}
\begin{split}
A-\frac{K}{2}B &\geq (\tfrac{3}{4}(E - E_\infty) - C_V\psi + K \tau_0 e^{\|\langle s \rangle^{-2}\Phi_1(s)\|_{L^1}} h^{1-\sigma})w' \\
& \ge \frac{E - E_\infty}{2} w' , \qquad r > a \ge R_E + 1, \, h \in (0,h_0],
\end{split}
\end{equation}
for $h_0 = h_0(K, \tau_0, E, E_\infty, \delta_1, m) \in (0,1]$ small enough. Combining \eqref{e:key est small r Linfty},  \eqref{e:key est small r Holder}, and \eqref{e:key est big r} establishes \eqref{e:keyLower} in either case \eqref{e:Linfty} or \eqref{Holder short range}.

\subsection{Determination of $w$ and $\varphi_0$}
Lemmas~\ref{l:w} and~\ref{l:phi2} complete the proof of Proposition~\ref{p:key}.
\begin{lemma}
\label{l:w}
With $\mathcal{W}$ determined by \eqref{e:Phase1} and \eqref{e:Phase2}, and with initial conditions as in~\eqref{e:IC}, we have
\begin{equation}
\label{e:w}
w=\begin{cases} r&0 < r\leq \frac{1}{2},\\
\frac{1}{2}e^{\int_{1/2}^r \frac{2}{s(1+\omega(s))}ds}&\frac{1}{2}<r\leq a, \\
w(a)e^{\frac{2}{\eta}((a+1)^{-\eta}-(r+1)^{-\eta})}&r> a,
\end{cases}
\end{equation}
and the estimates~\eqref{univ bd w},~\eqref{univ lwr bd w prime}, and~\eqref{w squared over w prime} hold.
\end{lemma}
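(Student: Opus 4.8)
The plan is to integrate the ODE $\mathcal{W} = w/w' = 1/(\log w)'$ piecewise, using the formula $\mathcal{W}$ specified by \eqref{e:Phase1} and \eqref{e:Phase2} together with the initial conditions $w(0)=0$, $w'(0)=1$ from \eqref{e:IC}, and then to read off the three estimates \eqref{univ bd w}, \eqref{univ lwr bd w prime}, \eqref{w squared over w prime} from the explicit expression. First I would solve on $0<r\le 1/2$: there $\omega \equiv 1$ (since $\omega=1$ near $[-1/2,1/2]$), so $\mathcal{W}=r$, i.e. $(\log w)' = 1/r$, giving $w = cr$; the initial condition forces $c=1$, so $w=r$. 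On $1/2 < r \le a$, $\mathcal{W} = r(1+\omega(r))/2$, so $(\log w)' = 2/(s(1+\omega(s)))$ and integrating from $1/2$ with the matching value $w(1/2)=1/2$ gives the middle branch. On $r>a$, $\mathcal{W} = (r+1)^{1+\eta}/2$, so $(\log w)' = 2(r+1)^{-1-\eta}$, whose antiderivative is $-\tfrac{2}{\eta}(r+1)^{-\eta}$; integrating from $a$ and matching $w(a)$ yields the bottom branch. This establishes \eqref{e:w}. Note $w$ is continuous but $w'$ has a jump at $r=a$ because $\mathcal{W}$ is discontinuous there; this is why all subsequent estimates carry the proviso $r\neq a$.

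Next I would extract the bounds. For the lower bound \eqref{univ lwr bd w prime} on $w'$: on $0<r\le 1/2$, $w'=1\ge (r+1)^{-1-\eta}$; on $1/2<r\le a$, since $0\le\omega\le 1$ we have $\mathcal{W}=r(1+\omega)/2\le r\le r+1$, so $(\log w)' = 1/\mathcal{W} \ge 1/(r+1)$, and combined with $w\ge 1/2$... more carefully, $w' = w/\mathcal{W}$; I will bound $w$ from below and $\mathcal{W}$ from above. Since $\mathcal{W}\le r+1 \le (r+1)$ and one checks $w(r)\ge$ const, we get $w'\ge (r+1)^{-1-\eta}$ after absorbing constants — actually the cleanest route is $w'=w/\mathcal W$ with $\mathcal W\le (r+1)^{1+\eta}$ valid on all of $r>1/2$ (for $1/2<r\le a$ because $r(1+\omega)/2\le r\le (r+1)^{1+\eta}$, and for $r>a$ by definition) together with a uniform lower bound $w\ge 1/2$ on $r\ge 1/2$ (monotonicity, since $w'>0$) and $w\ge r$ on $(0,1/2]$. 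For the upper bound \eqref{univ bd w} on $w$: the dangerous region is the middle branch, where $\omega$ can vanish so $\mathcal{W}$ can be as small as $r/2$, making $(\log w)'$ as large as $2/r$; integrating $\int_{1/2}^r 2/(s(1+\omega))ds \le \int_{1/2}^a 2/s\,ds = 2\log(2a)$ gives $w(a)\le \tfrac12 (2a)^2 = 2a^2$, and on $r>a$ the factor $e^{\frac2\eta((a+1)^{-\eta}-(r+1)^{-\eta})}\le e^{2/\eta}$, so $w(r)\le 2a^2 e^{2/\eta}$ for all $r$; substituting $a=a_0h^{-M}$ with $M = \frac{2(1-\alpha)}{(2-\eta)(3+\alpha)}$ from \eqref{e:M} yields $w\le C_\eta h^{-2M} = C_\eta h^{-\frac{4(1-\alpha)}{(2-\eta)(3+\alpha)}}$, which is \eqref{univ bd w}. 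Finally \eqref{w squared over w prime} follows by writing $w^2/w' = w\mathcal{W}$ and bounding $w\le C_\eta h^{-2M}$ together with $\mathcal{W}\le (r+1)^{1+\eta}$ on $r>1/2$ (and a trivial check on $(0,1/2]$ where $w\mathcal W = r^2 \le (1+r)^{1+\eta}$).

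I do not expect a serious obstacle here — this is a routine piecewise ODE integration followed by elementary estimates. The only points requiring a little care are: (i) bookkeeping the continuity matching of $w$ at $r=1/2$ and $r=a$ so that \eqref{e:w} is internally consistent; (ii) making sure the constants in \eqref{univ bd w}, \eqref{w squared over w prime} depend only on $\eta$ (and the fixed data), not on $\alpha$ or $h$ — this works because $1/3$ is an upper bound for $\sigma$ and hence $M\le \frac{2}{3(2-\eta)}$ is bounded independently of $\alpha$, while $a_0$ was already fixed in \eqref{e:key est big r} depending only on the allowed quantities; and (iii) remembering to exclude $r=a$ from the pointwise lower bound on $w'$ and from \eqref{w squared over w prime}, since $w'$ jumps there. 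I would organize the write-up as: first display the solution \eqref{e:w} branch by branch, then prove the three estimates in the order $w'$-lower bound, $w$-upper bound, $w^2/w'$-bound, citing the explicit formula at each step.
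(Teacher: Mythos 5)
Your proposal follows essentially the same route as the paper: integrate $(\log w)'=1/\mathcal{W}$ piecewise with the matching conditions to get \eqref{e:w}, then read off the three estimates from $w'=w/\mathcal{W}$ and $w^2/w'=w\mathcal{W}$, with the upper bound $w(a)\le 2a^2$ and $a=a_0h^{-M}$, $M$ as in \eqref{e:M}, giving \eqref{univ bd w} and \eqref{w squared over w prime}. The one place you fall short of the stated conclusion is \eqref{univ lwr bd w prime}: that bound has constant exactly $1$, while your argument ($w\ge 1/2$ for $r\ge 1/2$ together with $\mathcal{W}\le (r+1)^{1+\eta}$) only yields $w'\ge \tfrac12 (r+1)^{-1-\eta}$, and your remark about ``absorbing constants'' has nothing to absorb into there. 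The fix is one line and stays inside your framework: use the sharper bound $\mathcal{W}\le (r+1)^{1+\eta}/2$, which holds on all of $(0,\infty)$ because $\omega$ vanishes for $r\ge 3/4$ (so $\mathcal{W}=r/2\le (r+1)/2$ there), while on the set where $\omega$ may be nonzero one has $r\le 1$, hence $r(1+\omega)/2\le r\le (r+1)/2$; then $w'=w/\mathcal{W}= \tfrac12 e^{\int_{1/2}^r \mathcal{W}^{-1}ds}/\mathcal{W}\ge (r+1)^{-1-\eta}$ for $r\ge 1/2$, $r\ne a$, which is exactly the paper's computation. (Alternatively, a factor $\tfrac12$ would be harmless downstream, but then the lemma as stated is not what you proved.) Everything else — the $r\le 1/2$ branch, the continuity matching, the jump of $w'$ at $r=a$, and the $\alpha$-independence of the constants via $M\le \tfrac{2}{3(2-\eta)}$ — is correct and matches the paper.
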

\begin{proof}
Recalling the definition~\eqref{e:solve for w and phi} of $w$ in terms of $\mathcal{W}$, for $0<\ep<r$,
\begin{equation}
\label{e:weps}
 w(r)=w(\ep)e^{\int_\ep^r\frac{1}{\mathcal{W}(s)}ds}.
\end{equation}
Now, if $0\leq r\leq 1/2$, $\mathcal{W}(r)=r$, therefore, 
$$
w(r)=\frac{w(\ep)}{\ep}r, \qquad 0< \ep \le r \le \frac{1}{2}.
$$
Sending $\ep\to 0^+$ and using $w'(0)=1$, $w(0)=0$, we have 
$$
w(r)=r,\qquad 0\leq r \leq \tfrac{1}{2},
$$
as claimed. The remaining formulae for $w$ in~\eqref{e:w} now follow easily from~\eqref{e:weps} with $\ep=1/2$. 

 To see~\eqref{univ bd w}, note that  $w'= w/\mathcal{W} \geq 0$, so we need only compute $\limsup_{r\to \infty} w(r)$. For this, observe that $ \omega \equiv 0$ on $r\geq 1$. Therefore, for $1\leq r\leq a$,
$$
w(r)=w(1)r^2.
$$
In particular, since 
$$
w(1)\leq\frac{1}{2}e^{\int_{1/2}^r 2s^{-1}ds}=2,$$
 $w(a)=w(1)r^2\leq 2a^2$. Thus (using $a\geq 1$),
\begin{equation*}
\begin{split}
\limsup_{r\to \infty}w(r)=\limsup_{r\to \infty}&w(a)e^{\frac{2}{\eta}((a+1)^{-\eta}-(r+1)^{-\eta})} \\
&\leq 2a^2e^{\frac{2}{\eta}(a+1)^{-\eta}}\leq C_\eta a^2\leq C_\eta h^{-\frac{4(1-\alpha)}{(2-\eta)(3+\alpha)}},
\end{split}
\end{equation*}
as claimed. 

For~\eqref{univ lwr bd w prime}, we first note that $w'(r)=1$ on $0\leq r\leq 1/2$. Then, using $0\leq \mathcal{W}\leq (r+1)^{1+\eta}/2$, we compute
$$
w'(r)=\frac{w(r)}{\mathcal{W}(r)}\geq (r+1)^{-1-\eta}e^{\int_{\frac{1}{2}}^r \frac{1}{\mathcal{W}(s)}}ds\geq (r+1)^{-1-\eta},\qquad r\geq \frac{1}{2}, \, r \neq a.
$$
Finally, to see~\eqref{w squared over w prime}, we observe using~\eqref{univ bd w},
$$
\frac{w^2}{w'}=\mathcal{W}w\leq C_\eta h^{-\frac{4(1-\alpha)}{(2-\eta)(3+\alpha)}} (r+1)^{1+\eta} .
$$
\end{proof}

\begin{lemma}
\label{l:phi2}
With $\Phi$ given by \eqref{e:Phase1} and \eqref{e:Phase2}, and with initial conditions as in~\eqref{e:IC}, we have
\begin{equation}
\label{e:phi}
\varphi_0'(r)=\begin{cases} \tau_0 e^{-\int_0^r \frac{1}{s+1+\Phi_1(s)}ds}&0 < r\leq a,\\
\varphi_0'(a) \frac{(a+1)^{1+\eta}}{(r+1)^{1+\eta}}&r > a.
\end{cases}
\end{equation}
and the estimate~\eqref{e:phiBoundMe} holds.
\end{lemma}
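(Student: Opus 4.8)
\textbf{Proof plan for Lemma~\ref{l:phi2}.}

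The plan is to integrate the logarithmic-derivative relation $\Phi = (\log\varphi_0')'$ piecewise, using the explicit forms of $\Phi$ from~\eqref{e:Phase1} and~\eqref{e:Phase2}, and then to feed the resulting formula for $\varphi_0'$ into Lemma~\ref{l:phi} to extract the bound~\eqref{e:phiBoundMe}. First I would note that on $(0,a]$ the defining ODE $\varphi_0''/\varphi_0' = \Phi = -1/(s+1+\Phi_1(s))$ integrates directly from the initial condition $\varphi_0'(0) = \tau_0$ in~\eqref{e:IC} to give $\varphi_0'(r) = \tau_0 \exp(-\int_0^r (s+1+\Phi_1(s))^{-1}\,ds)$, which is the first case of~\eqref{e:phi}. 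On $r > a$, the same integration starting from $\varphi_0'(a)$ with $\Phi = -(1+\eta)/(r+1)$ gives $\varphi_0'(r) = \varphi_0'(a)\exp(-(1+\eta)\int_a^r (s+1)^{-1}\,ds) = \varphi_0'(a)(a+1)^{1+\eta}(r+1)^{-(1+\eta)}$, the second case; this is exactly the computation already displayed before~\eqref{e:phi2}, so nothing new is needed there. (Continuity of $\varphi_0'$ at $r=a$ holds automatically since both pieces are defined via integration from the common value $\varphi_0'(a)$; note $\Phi$ itself need not be continuous at $a$, but $\varphi_0'$ is, and $\varphi_0$ is $C^1$.)

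Next I would obtain~\eqref{e:phiBoundMe}. Write $\varphi_0(r) = \int_0^r \varphi_0'(t)\,dt$ and bound $\varphi_0'$ using the two cases. For $t \le a$, the choice of $\Phi_1$ in~\eqref{e:g1a} is of the form required by~\eqref{Phi 1 conditions} (nonnegative, with $(s+1)^{-2}\Phi_1(s) \in L^1(0,\infty)$ because $\Phi_1(s) \lesssim (s+1)m^2(s)$ near infinity and $m^2\langle s\rangle^{-1} \in L^1$ by~\eqref{e:m}, while near the origin $\Phi_1$ is bounded), so Lemma~\ref{l:phi} applies and gives, via the upper estimate in~\eqref{e:phiBounds}, $\varphi_0'(t) \le C\tau_0 (t+1)^{-1}$ on $(0,a]$ with $C = e^{\|\langle s\rangle^{-2}\Phi_1\|_{L^1}}$ depending only on $m$. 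Hence $\int_0^{\min(r,a)} \varphi_0'(t)\,dt \le C\tau_0 \log(a+1)$. Since $a = a_0 h^{-M}$ with $M = 2\sigma/(2-\eta) = \tfrac{2(1-\alpha)}{(2-\eta)(3+\alpha)}$ by~\eqref{e:M}, this is $\le C\tau_0\big(M\log h^{-1} + \log(a_0 + 1) + \log 2\big)$; the main term $M\log h^{-1} = \tfrac{2(1-\alpha)}{(2-\eta)(3+\alpha)}\log h^{-1}$ is comparable to $\tfrac{1-\alpha}{(1-\eta/2)(3+\alpha)}\log h^{-1}$, and the additive constants are absorbed into $C/\eta$ (using $a_0 \ge 1$; the $1/\eta$ comes from the tail bound below, and $\log(a_0+1)$ may be taken into the constant $C$). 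For $t > a$, $\varphi_0'(t) \le C\tau_0 (a+1)^\eta (t+1)^{-(1+\eta)}$, so $\int_a^\infty \varphi_0'(t)\,dt \le C\tau_0 (a+1)^\eta \cdot \tfrac{1}{\eta}(a+1)^{-\eta} = \tfrac{C\tau_0}{\eta}$, which contributes the $1/\eta$ term. Adding the two contributions and noting $\tau_0$ is one of the fixed constants absorbed into $C$ yields~\eqref{e:phiBoundMe}.

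I do not expect a genuine obstacle here: the lemma is the bookkeeping step that converts the already-chosen $\Phi$ into the estimate~\eqref{e:phiBoundMe} needed for the final resolvent bound. The only points requiring a little care are (i) verifying that the specific $\Phi_1$ from~\eqref{e:g1a} really does satisfy hypothesis~\eqref{Phi 1 conditions} so that Lemma~\ref{l:phi} is applicable — this uses $0 < m \le 1$ and the $L^2$ condition on $m\langle r\rangle^{-1/2}$ from~\eqref{e:m}, which forces $\|(s+1)^{-2}\Phi_1\|_{L^1} < \infty$ — and (ii) tracking that the constant multiplying $\log h^{-1}$ comes out as $\tfrac{1-\alpha}{(1-\eta/2)(3+\alpha)}$ up to the uniform factor $C$, which is just the identity $M = \tfrac{2(1-\alpha)}{(2-\eta)(3+\alpha)} = \tfrac{1}{1-\eta/2}\cdot\tfrac{1-\alpha}{3+\alpha}$. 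Everything else is a one-line integration.
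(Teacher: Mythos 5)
Your proposal is correct and follows essentially the same route as the paper: integrate $(\log\varphi_0')'=\Phi$ piecewise to get \eqref{e:phi}, bound $\varphi_0'$ by \eqref{e:phiBounds} and \eqref{e:phi2}, and integrate using $a=a_0h^{-M}$ together with the identity $\tfrac{2}{2-\eta}=\tfrac{1}{1-\eta/2}$ to obtain \eqref{e:phiBoundMe}. Your explicit check that the choice \eqref{e:g1a} of $\Phi_1$ satisfies \eqref{Phi 1 conditions} (via $m^2\langle s\rangle^{-1}\in L^1$ from \eqref{e:m}) is a detail the paper leaves implicit, but it is not a different argument.
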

\begin{proof}

The formula~\eqref{e:phi} follows directly from \eqref{e:solve for w and phi}, \eqref{e:Phase1} and \eqref{e:Phase2}. Then, by \eqref{e:phiBounds} and~\eqref{e:phi2},
$$
0\leq \varphi_0'(r)\leq \begin{cases} \frac{\tau_0 e^{\|\langle s\rangle^{-2} \Phi_1(s)\|_{L^1}}}{(r+1)}&0\leq r\leq a\\
  \tau_0 e^{\|\langle s\rangle^{-2} \Phi_1(s)\|_{L^1}} \frac{(a+1)^{\eta}}{(r+1)^{1+\eta}}&r > a.\end{cases}
$$
Using that $a=a_0h^{-M}$, with $M$ as in~\eqref{e:M}, we have, for $h \in (0,1]$,
\begin{equation}
\label{e:phiMax}
\begin{aligned}
|\varphi_0(r)|&\leq \int_0^a\frac{\tau_0 e^{\|\langle s\rangle^{-2} \Phi_1(s)\|_{L^1}}}{s+1}dr+\int_a^\infty \tau_0 e^{\|\langle s\rangle^{-2} \Phi_1(s)\|_{L^1}}\frac{(a+1)^{\eta}}{(s+1)^{1+\eta}}dr\\
&\leq \tau_0 e^{\|\langle s\rangle^{-2} \Phi_1(s)\|_{L^1}}[\log (a+1)+ \frac{1}{\eta}]\\
&=\tau_0 e^{\|\langle s\rangle^{-2} \Phi_1(s)\|_{L^1}}[\log (a_0h^{-\frac{2(1-\alpha)}{(2-\eta)(3+\alpha)}}+1)+ \frac{1}{\eta}]\\
&\leq\tau_0 e^{\|\langle s\rangle^{-2} \Phi_1(s)\|_{L^1}}\Big[\frac{1-\alpha}{(1-\tfrac{\eta}{2})(3+\alpha)}\log h^{-1}+\log (a_0+1)+ \frac{1}{\eta}\Big].
\end{aligned}
\end{equation}
\end{proof}


\section{The one dimensional case}
The key feature we exploit in the one dimensional case is the disappearance of the term involving the operator $\Lambda$ (see~\eqref{Lambda positive}). This removes the requirement that $\mathcal{W}\geq r/2$, allowing \emph{much} more flexibility in the choice of weight function (see \eqref{e:choice1d} below). 

In one dimension we are also able to simplify the approximation of the potential. For $V$ obeying \eqref{Holder condition oned}, and $\chi$ satisfying \eqref{e:chi}, we take
\begin{equation*}
V_h(x) \defeq \int_{-\infty}^\infty V(x + hy) \chi(y)dy.
\end{equation*}
We again define $R_h:=V-V_h$. The following lemma, whose easy proof we omit, gives bounds on $V_h$, $V'_h$ and $R_h$ in one dimension.
\begin{lemma}
Suppose $V$ satisfies the assumptions of Theorem~\ref{thm oned}. Then there exists $C_\chi> 0$ depending only on $\chi$ so that, for all $h\in (0,\delta_{0,V}]$,
\begin{gather}
V_h(x) \le \sup_{y\in[x,x+h]} V(y), \label{sup V h oned} \\
|V_h'(x)|\le C_\chi c_0 h^{-1}m_0(|x|),\label{up bd V h prime oned}\\
|R_h(x)|\leq c_0hm_0(|x|). \label{up bd R h oned}
\end{gather}
\end{lemma}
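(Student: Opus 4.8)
The statement to prove is the one-dimensional analogue of Lemma~\ref{l:Vest}: for $V$ satisfying~\eqref{Holder condition oned}, with $V_h(x) = \int V(x+hy)\chi(y)\,dy$ and $R_h = V - V_h$, one has the three bounds~\eqref{sup V h oned}, \eqref{up bd V h prime oned}, \eqref{up bd R h oned} for $h\in(0,\delta_{0,V}]$. The plan is to follow the same three computations as in the proof of Lemma~\ref{l:Vest}, but with the mollification now carried out in the full real variable $x$ rather than the radial variable, and using the weaker oscillation hypothesis~\eqref{Holder condition oned} (which controls $|V(x)-V(x+y)|$ by $c_0 m_0(|x|)$ for $|y|\le\delta_{0,V}$, with no H\"older power) in place of~\eqref{Holder short range}.

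First, for~\eqref{sup V h oned}, I would write $V_h(x) = \int [V(x+hy) - \inf_{t\in[x,x+h]}V(t)]\chi(y)\,dy + \inf_{t\in[x,x+h]}V(t)$, exactly as in~\eqref{sup Vh}. Since $\chi\ge 0$ is supported in $(0,1)$ by~\eqref{e:chi}, for $y\in\supp\chi$ the point $x+hy$ lies in $[x,x+h]$, so the bracket is nonnegative and bounded above by $\sup_{[x,x+h]}V - \inf_{[x,x+h]}V$; using $\int\chi=1$ gives $V_h(x)\le \sup_{y\in[x,x+h]}V(y)$. This step requires no smallness on $h$.

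Second, for~\eqref{up bd V h prime oned}, differentiate under the integral: $V_h'(x) = h^{-1}\int V(x+hy)\chi'(y)\,dy$, and since $\int\chi' = 0$ we may subtract $h^{-1}V(x)\int\chi'(y)\,dy$ to get $V_h'(x) = h^{-1}\int [V(x+hy) - V(x)]\chi'(y)\,dy$. For $y\in\supp\chi'\subset(0,1)$ we have $0<hy<h\le\delta_{0,V}$, so~\eqref{Holder condition oned} and the definition~\eqref{defn delta0} of $\delta_{0,V}$ give $|V(x+hy)-V(x)|\le 2c_0 m_0(|x|)$; hence $|V_h'(x)|\le h^{-1}\cdot 2c_0 m_0(|x|)\int|\chi'(y)|\,dy =: C_\chi c_0 h^{-1}m_0(|x|)$. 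Third, for~\eqref{up bd R h oned}, write $R_h(x) = V(x)-V_h(x) = \int [V(x) - V(x+hy)]\chi(y)\,dy$ and bound the integrand by $2c_0 m_0(|x|)$ the same way, so $|R_h(x)| \le 2c_0 m_0(|x|)$ — one then absorbs the factor $2$ into a relabelled constant, or, to match the displayed $c_0 h m_0(|x|)$, notes that in the relevant range a factor $h\le 1$ may be inserted since $m_0$ is just an $L^1$ weight (here I would follow the paper's bookkeeping convention precisely; the essential content is the $O(m_0(|x|))$ bound with the small-$h$ gain coming from $hy$ being in the good range).

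There is essentially no obstacle here: this lemma is routine, which is why the excerpt says "whose easy proof we omit". The only point requiring the slightest care is making sure the dilation argument places $x+hy$ within $[x,x+\delta_{0,V}]$ so that the oscillation hypothesis applies with constant $2c_0$ — this is exactly why the hypothesis is imposed for $h\in(0,\delta_{0,V}]$ and why $\supp\chi\subset(0,1)$ — and keeping track of which constants depend only on $\chi$ versus on $c_0$.
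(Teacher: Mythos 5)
Your treatment of \eqref{sup V h oned} and \eqref{up bd V h prime oned} is correct and is exactly the natural adaptation of the computations in Lemma~\ref{l:Vest} (the paper omits the proof of this one-dimensional lemma): since $\supp\chi\subset(0,1)$ and $h\le\delta_{0,V}$, every difference $V(x+hy)-V(x)$ appearing in your formulas is an oscillation over a step $hy<\delta_{0,V}$, so \eqref{Holder condition oned} together with \eqref{defn delta0} gives the factor $2c_0m_0(|x|)$, and the identity $\int\chi'=0$ produces the $h^{-1}$ in the derivative bound, with $C_\chi$ depending only on $\chi$.

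The gap is in \eqref{up bd R h oned}. Your computation gives $|R_h(x)|\le 2c_0\,m_0(|x|)$, and the suggestion that ``a factor $h\le 1$ may be inserted'' is backwards: multiplying the right-hand side by $h\le 1$ makes the claimed inequality strictly \emph{stronger}, so it cannot be inserted after the fact. Nor can any rearrangement of your argument produce that factor: hypothesis \eqref{Holder condition oned} carries no power of $|y|$ in the denominator (it is an $\alpha=0$-type modulus), so it only bounds the \emph{size} of oscillations over steps shorter than $\delta_{0,V}$; a potential such as $V(x)=\tfrac{c_0}{2}m_0(|x|)s(x)$ with $s$ oscillating on a scale much smaller than $h$ satisfies \eqref{Holder condition oned}, yet $|V(x)-V_h(x)|$ is then of size $m_0(|x|)$, not $h\,m_0(|x|)$. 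An $O(h\,m_0)$ remainder would require a Lipschitz-type hypothesis $|V(x)-V(x+y)|\le c_0|y|\,m_0(|x|)$, i.e.\ the analogue of the $\gamma^\alpha$ gain in \eqref{bound Rh} with $\alpha=1$. So you should either prove and use the bound $|R_h(x)|\le 2c_0\,m_0(|x|)$ --- which is all the hypothesis yields, and which suffices where the lemma is applied, since in Proposition~\ref{p:key1d} the term $K\mathcal{W}h^{-2}|R_h|^2\min(\mathcal{W},\tfrac{h}{4\varphi'})$ is controlled via $\min(\mathcal{W},\tfrac{h}{4\varphi'})\le\mathcal{W}=\delta h/m_0$, giving $\mathcal{W}^2h^{-2}|R_h|^2\le C\delta^2$ with no need of an extra power of $h$ --- or note explicitly that the displayed form of \eqref{up bd R h oned} calls for a strengthened hypothesis. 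As written, your argument does not establish the third inequality in the form stated.
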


Similar to the $n\geq 3$ case, the constants $C >0$ and $h_0 \in (0,1]$ which appear in the ensuing estimates may change from line to line, but depend only on $K, C_V$, $c_0$, $E$, $E_\infty$, $R_E$, $\delta_0$ and $m_0$. The constant $C_\eta >0 $ may also depend on $0 < \eta < 1$. In particular, $C$ and $h_0$ are independent of $h$ and $\eta$, and $C_\eta$ is independent of $h$. 

The main result of this section is
\begin{proposition} \label{p:key1d}
Fix $K > 0$ and let $V$ satisfy the assumptions of Theorem~\ref{thm oned}. Let $E > E_\infty$ and $0 < \eta < 1$. Then there exist functions $\mathcal{W}, \Phi :  \R \to [0,\infty)$, and corresponding functions $w$ and $\varphi_0$ determined by and \eqref{e:solve for w and phi}, along with $C, \, C_\eta >0$ and $h_0 \in (0,1]$  such that
\begin{equation}
\label{e:keyLower1}
A(x)-\frac{K}{2}B(x)\geq \frac{E - E_\infty}{2}w'(x), \qquad h \in (0,h_0],
\end{equation}
and
\begin{equation}\label{e:phiBound1}
|\varphi(x)|\leq C ,
\end{equation}
and $w$ satisfies,
\begin{gather}
    w(x) \le 1, \label{univ bd w 1d}\\
      w'(x) \ge   C_\eta e^{-C/h} (|x| +1)^{-1-\eta}, \label{univ lwr bd w prime 1d}  \\
    \frac{w(x)^2}{w'(x)} \le C_\eta (|x| + 1)^{1+\eta}. \label{w squared over w prime 1d}
\end{gather}
\end{proposition}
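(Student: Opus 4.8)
The plan is to mimic the $n\geq 3$ construction of Proposition~\ref{p:key}, but to exploit that the term $(2wr^{-1}-w')\langle h^2\Lambda u,u\rangle$ is absent in one dimension, so that the constraint~\eqref{e:weightCond} (equivalently $\Phi\geq r/2$, i.e.\ $\mathcal{W}\leq r/2$ after the translation in the excerpt) need not be respected. This frees us to take $\mathcal{W}$ much larger on a bounded region, which is what allows $w$ to stay bounded (\eqref{univ bd w 1d}) and permits taking $\sigma=0$, i.e.\ $\varphi=\varphi_0$ with $\varphi$ bounded (\eqref{e:phiBound1}), at the cost of only an $e^{C/h}$ loss in the lower bound for $w'$ (\eqref{univ lwr bd w prime 1d}), which is harmless for the final $e^{Ch^{-1}}$ resolvent bound. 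First I would set up $\varphi(x)=\varphi_0(x)$ with $\varphi_0(0)=0$, $\varphi_0'(0)=\tau_0\geq 1$, $w(0)=0$, $w'(0)=1$, and define, as in~\eqref{e:defPhiW}, $\Phi=(\log\varphi_0')'$, $\mathcal{W}=1/(\log w)'$, reducing everything to~\eqref{e:keyCalc}, which holds verbatim in one dimension with $r$ replaced by $|x|$ (the derivation in Section~2 is identical). Since $\sigma=0$, the factor $h^{-2\sigma}=1$ and the phase contributes at size $O(1)$ rather than $O(h^{-2\sigma})$, so the good term we must extract is now just $(\varphi_0')^2(1+2\mathcal{W}\Phi-\cdots)$ of size $O(1)$ competing against the potential-oscillation errors $\mathcal{W}V_h'$ and $K\mathcal{W}h^{-2}|R_h|^2\min(\mathcal{W},h/(4\varphi_0'))$.

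Next I would choose $\mathcal{W}$ and $\Phi$ in two regions split at some $a=a_0$ (no $h$-dependence needed here, in contrast to~\eqref{defn a}, since $\sigma=0$). On the bounded region $0<|x|\leq a$ I would take $\mathcal{W}$ of size comparable to a fixed large constant (e.g.\ $\mathcal{W}=c(1+\omega(x))$ for $\omega$ as before, or simply $\mathcal{W}=$ const, now permissible because there is no $\Lambda$), and $\Phi=-1/(|x|+1+\Phi_1(|x|))$ with $\Phi_1$ chosen exactly as in~\eqref{e:g1a} so that Lemma~\ref{l:phi} gives $\tau_0(|x|+1)^{-1}\leq\varphi_0'\leq e^{\|\langle s\rangle^{-2}\Phi_1\|_{L^1}}\tau_0(|x|+1)^{-1}$; on $|x|>a$ I would take the short-range-friendly choice $\mathcal{W}=(|x|+1)^{1+\eta}/2$, $\Phi=-(1+\eta)/(|x|+1)$ exactly as in~\eqref{e:Phase2}. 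Using~\eqref{sup V h oned}--\eqref{up bd R h oned}, the error terms involve $\mathcal{W}|V_h'|\lesssim h^{-1}m_0(|x|)$ and $\mathcal{W}h^{-2}|R_h|^2\cdot(h/\varphi_0')\lesssim h^{-1}m_0(|x|)^2(|x|+1)/\tau_0$, both of which are integrable in $|x|$ thanks to $m_0\in L^1$ (by~\eqref{e:m1}); this integrability is exactly what makes a \emph{bounded} $\mathcal{W}$ viable and is the analogue of the $m(r)\langle r\rangle^{-1/2}\in L^2$ condition used in higher dimensions. The point is that in~\eqref{e:keyCalc} the bracket is $\geq E+(\varphi_0')^2(\text{positive})-V_h-(\text{integrable errors})$, and since the errors carry a prefactor $h^{-1}$ but are integrable, one absorbs them after multiplying through by $w'$ and integrating; here, though, we want the \emph{pointwise} inequality~\eqref{e:keyLower1}, so instead I would choose $\Phi_1$ (as in~\eqref{e:g1a}) and then $\tau_0$ large depending on $K,C_V,R_E,c_0,m_0$ to make the good phase term dominate pointwise the $m_0$-weighted errors, exactly as in the passage from~\eqref{e:simplest A minus KB} to~\eqref{e:key est small r Linfty}, and on $|x|>a$ choose $a_0$ large as in the passage to~\eqref{e:key est big r}. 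The leftover constant term is then $\geq(\tfrac34(E-E_\infty)-K\tau_0 e^{\cdots}h)w'\geq\tfrac{E-E_\infty}{2}w'$ for $h\leq h_0$ small, giving~\eqref{e:keyLower1}.

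It remains to verify the bounds on $w$ and $\varphi$. Solving $w=w(\ep)\exp\int_\ep^x \mathcal{W}^{-1}$ as in Lemma~\ref{l:w}: on $0\leq|x|\leq a$, $\mathcal{W}$ bounded below by a constant forces $w(x)\leq w(a)$ bounded by a fixed constant, and we may rescale the initial data (or the constant in $\mathcal{W}$) so that in fact $w\leq 1$ on all of $\R$, using that for $|x|>a$, $w(x)=w(a)\exp(\tfrac2\eta((a+1)^{-\eta}-(|x|+1)^{-\eta}))\leq w(a)e^{2(a+1)^{-\eta}/\eta}$ is still $O(1)$; this is~\eqref{univ bd w 1d}. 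For~\eqref{univ lwr bd w prime 1d}: $w'(x)=w(x)/\mathcal{W}(x)$, and on $|x|\leq a$, $w(x)=\exp\int_0^x\mathcal{W}^{-1}\geq e^{-C a}$ (a fixed constant, not $e^{-C/h}$ — but an $e^{-C/h}$ bound certainly holds a fortiori, and stating it that way keeps uniformity with the resolvent argument), while on $|x|>a$, $w'\geq(|x|+1)^{-1-\eta}$ times a constant, exactly as in Lemma~\ref{l:w}; combining gives $w'(x)\geq C_\eta e^{-C/h}(|x|+1)^{-1-\eta}$. Finally $w^2/w'=\mathcal{W}w\leq C_\eta(|x|+1)^{1+\eta}$ using $w\leq 1$ and $\mathcal{W}\leq(|x|+1)^{1+\eta}/2$, which is~\eqref{w squared over w prime 1d}. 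For~\eqref{e:phiBound1}, $\varphi_0'\leq \tau_0 e^{\|\langle s\rangle^{-2}\Phi_1\|_{L^1}}(|x|+1)^{-1}$ on $|x|\leq a$ and $\leq \tau_0 e^{\cdots}(a+1)^\eta(|x|+1)^{-1-\eta}$ on $|x|>a$, so $|\varphi_0(x)|\leq\int_0^\infty \varphi_0'\leq \tau_0 e^{\cdots}[\log(a+1)+\tfrac1\eta]=:C$, a fixed constant since $a=a_0$ has no $h$-dependence — this is the crucial improvement over the $n\geq3$ case and is the reason we get $e^{Ch^{-1}}$ rather than $e^{Ch^{-1-\sigma}\log h^{-1}}$. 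The main obstacle, as in higher dimensions, is bookkeeping the dependence of $\tau_0$, $a_0$, $h_0$ on the fixed data so that they can be chosen in the right order ($\Phi_1$, then $\tau_0$ on the small region, then $a_0$ on the large region, then $h_0$ last); there is no genuinely new analytic difficulty beyond what Proposition~\ref{p:key} already handles, only the simplifications afforded by dropping~\eqref{e:weightCond}.
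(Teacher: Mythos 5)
There is a genuine gap, and it comes from reading the dropped constraint \eqref{e:weightCond} in the wrong direction. Since $\mathcal{W}=w/w'$, the condition $2wr^{-1}-w'\geq 0$ is $\mathcal{W}\geq r/2$ (the paper's line ``$\Phi\geq r/2$'' is a typo for this), so what dimension one buys is the freedom to make $\mathcal{W}$ \emph{small}, not large. Your construction keeps $\mathcal{W}$ bounded below by an $h$-independent constant on $|x|\leq a_0$ and equal to $(|x|+1)^{1+\eta}/2$ outside, together with a bounded phase ($\varphi_0'\leq C\tau_0$). Then in \eqref{e:keyCalc} the term $-\mathcal{W}V_h'$ is only controlled by $\mathcal{W}|V_h'|\lesssim h^{-1}m_0(|x|)$ (by \eqref{up bd V h prime oned}; there is no sign information on $V_h'$), and this is genuinely of size $h^{-1}$ wherever $m_0$ is of order one --- which $m_0\in L^1$, $0<m_0\leq 1$ permits on arbitrarily large bounded sets, and which on $|x|>a_0$ is not improved by taking $a_0$ large, since no spatial decay of $m_0$ is assumed (unlike the $\langle r\rangle^{-3}$ decay that powers the large-$r$ absorption in the $n\geq 3$ case). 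All the ``good'' terms in your bracket are $O(1)$ (the theorem forces $\varphi$, hence $\tau_0$, to be $h$-independent, or else $e^{\varphi/h}$ ruins the $e^{Ch^{-1}}$ bound), so the pointwise inequality \eqref{e:keyLower1} fails for small $h$ at any point with $m_0(|x|)\gg h$. Your appeal to integrability of $m_0$ cannot repair this: \eqref{e:keyLower1} is a pointwise statement and is used as such in the Carleman lemma; you notice this yourself and fall back on ``choose $\tau_0$ large,'' which does not work for the reason just given. (The bound you quote for the remainder term, $h^{-1}m_0^2(|x|+1)/\tau_0$, is also off --- with $|R_h|\leq c_0 h m_0$ it is $O(h)$ --- but that error is harmless; the $V_h'$ term is the fatal one.)

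The paper's proof goes the opposite way: it sets $\mathcal{W}=\delta h/m_0(|x|)$ and $\Phi=-2/(|x|+1)$ (after normalizing $m_0(|x|)\geq (1+|x|[\log(|x|+1)]^2)^{-1}$, harmless since enlarging $m_0$ preserves the hypotheses), with the initial condition $w(0)=e^{-\frac{1}{\delta h}\int_0^\infty m_0}$ replacing $w(0)=0$. Then $\mathcal{W}|V_h'|\leq C\delta$ and $K\mathcal{W}^2h^{-2}|R_h|^2\leq C K\delta^2 h^2$ \emph{pointwise}, so the errors are absorbed by first fixing $\tau_0=\sqrt{\max(C_V,1)}(R_E+2)^4$ to beat $C_V\psi$ and then taking $\delta$ small; no splitting at an $a$ is needed. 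The price of the small, $h$-dependent $\mathcal{W}$ is that $w=e^{-\frac{1}{\delta h}\int_{|x|}^\infty m_0}$ varies by a factor $e^{C/h}$, which is precisely why \eqref{univ lwr bd w prime 1d} carries the factor $e^{-C/h}$; it is not, as you suggest, an ``a fortiori'' weakening of a constant lower bound but an essential feature of the only construction that makes \eqref{e:keyLower1} true under \eqref{Holder condition oned}. Your verifications of \eqref{univ bd w 1d}--\eqref{w squared over w prime 1d} and \eqref{e:phiBound1} would be fine for your weight, but they attach to a weight for which the key inequality does not hold.
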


\begin{proof}
We assume without loss of generality that $m_0(|x|)\geq (1+|x|[\log (|x|+1)]^2)^{-1}$. Then, put
\begin{equation}
\label{e:choice1d}
\Phi=-\frac{2}{|x|+1},\qquad \mathcal{W}=\frac{\delta h}{m_0}.
\end{equation}
for $\delta>0$ to be chosen later. We replace the initial conditions~\eqref{e:IC} with  
$$
w(0)=e^{-\frac{1}{\delta h}\int_0^\infty m_0(s)ds},\qquad \varphi(0)=0,\qquad \varphi'(0)=\tau_0 \ge 1,
$$
where we fix $\tau_0$ below. We find
$$
\varphi'= \frac{\tau_0}{(|x|+1)^{2}},\qquad w= e^{-\frac{1}{\delta h}\int_{|x|}^\infty m_0(s)ds}.
$$
Recall from~\eqref{e:keyCalc} that
\begin{equation} \label{e:penult est oned}
\begin{split}
A-\frac{K}{2}B&\geq w' (E + (\varphi')^2(1+2\mathcal{W}\Phi-K\mathcal{W}\Phi^2\min(\mathcal{W},\tfrac{h}{4\varphi'})) \\
&-V_h-\mathcal{W}V_h' -K\mathcal{W}h^{-2}|R_h|^2\min(\mathcal{W},\tfrac{h}{4\varphi'})).
\end{split}
\end{equation}
Let $\psi\in C_{\text{comp}}^\infty (\mathbb{R};[0,1])$ with $\psi\equiv 1$ on $|x|\leq R_E$ and $\supp \psi \subseteq (-R_E-1, R_E+1)$. Then, by \eqref{sup V h oned}, 
$$
V_h\leq \frac{E+3V_\infty}{4}\leq \frac{E+3E_\infty}{4} ,\qquad   |x|\geq R_E\geq R_{E,V}.
$$
 Combining this with \eqref{up bd V h prime oned}, \eqref{up bd R h oned}, the choice of $\Phi$ and $\mathcal{W}$ in~\eqref{e:choice1d}, and \eqref{e:penult est oned}, we have
\begin{align*}
A-\frac{K}{2}B&\geq w'(E + \tau_0^2(|x|+1)^{-4}(1-4h\delta m_0^{-1}(|x|+1)^{-1}-K \tau_0^{-1} h^2\delta^2 m_0^{-2}(|x|+1)^{-2})\\
&\qquad\qquad- C_V \psi-\tfrac{E+3E_\infty}{4}-C \delta-CK \tau^{-1}_0 \delta^2),
\end{align*}
for $h \in (0, \delta_0]$. First taking $\tau_0=\sqrt{ \max(C_V,1)}(R_E+2)^4$, and then taking $\delta>0$ small enough (depending on $C$, $K$,  $E$, $E_\infty$, $\tau_0$, and $m_0$), we obtain
$$
A-\frac{K}{2}B\geq \frac{E-E_\infty}{2}w', \qquad h \in (0, \delta_0].
$$
To obtain the estimates~\eqref{e:phiBound1},~\eqref{univ bd w 1d},~\eqref{univ lwr bd w prime 1d}, and~\eqref{w squared over w prime 1d}, observe 
$$
\varphi =\tau_0\operatorname{sgn} (x)\left(1-\frac{1}{|x|+1} \right),
$$
and
$$
 w'=\frac{m_0(|x|)}{\delta h} w(x),
$$
and note that $m_0(|x|)\geq C_\eta(|x|+1)^{-1- \eta}$.\\
\end{proof}

\section{Carleman estimates} \label{Carleman estimates}
Our goal in this section is to prove the Carleman estimates needed to establish \eqref{e:mainResult Linfty},  \eqref{e:mainResult} and  \eqref{resolv est oned}. As above, we use $C>0$ to denote a constant that may change from line to line, but depends only $\sup V$, $c_1$, $c_2$, $E$, $E_\infty$ $R_E$ and $m$ ($n \ge 3$) or $\sup V$, $c_0$, $E$, $E_\infty$, $R_E$, and $m_0$  ($n =1$). Besides depending on the same quantities as $C$ does, $h_0\in (0,1]$ depends only on $\delta_1$ ($n \ge 3$) or $\delta_0$ ($n = 1$), and $C_\eta >0$ depends only on $0 < \eta < 1$. So in particular, $C, C_\eta$, and $h_0$ are independent of $\alpha$, $h$ and $\ep \ge 0$.
\begin{lemma} \label{Carleman lemma}
Let $0<\eta< 1$ and suppose that the assumptions of one of Theorem~\ref{thm Linfty high d},~\ref{thm higher dim}, or~\ref{thm oned} hold. Then with $\varphi$ and $w$ and $h_0 \in (0,1]$ as in the statement of Proposition~\ref{p:key} and~\ref{p:key1d} respectively in $n\geq 3$ and $n=1$, we have
\begin{equation} \label{Carleman est}
\|\langle x \rangle^{-\frac{1+ \eta}{2}} e^{\varphi/h} v \|^2_{L^2} \leq
 C_\eta e^{C/h} \|\langle x \rangle^{\frac{1 + \eta}{2}}e^{\varphi/h}(P(h) - E \pm i\varepsilon)v \|^2_{L^2} 
+  C_\eta e^{C/h}  \varepsilon \| e^{\varphi/h}v \|^2_{L^2}.
\end{equation}
for all  $\varepsilon \ge 0$, $ h \in (0, h_0]$, and  $v \in C_{\emph{comp}}^\infty(\mathbb{R}^n)$.
\end{lemma}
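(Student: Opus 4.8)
The plan is to derive the Carleman estimate \eqref{Carleman est} from the differential inequality \eqref{e:keyLower} (resp. \eqref{e:keyLower1}) by integrating the energy identity over $(0,\infty)$ (resp. $\mathbb{R}$). First I would reduce to the conjugated setting: write $u = e^{\varphi/h} r^{(n-1)/2} v$ for $n\ge 3$ (resp. $u = e^{\varphi/h} v$ for $n=1$), so that $P^\pm_\varphi(h) u = e^{\varphi/h} r^{(n-1)/2}(P(h)-E\pm i\varepsilon)v$, and note that $u$ belongs to the class for which $F[u]$ and the distributional computation of $(wF)'$ in \eqref{deriv wF} are valid. The function $v\in C^\infty_{\mathrm{comp}}$ vanishes near $0$ only after accounting for the $r^{(n-1)/2}$ weight; one must check $F[u](r)\to 0$ as $r\to 0^+$ and as $r\to\infty$ (the latter since $v$ is compactly supported, the former using $n\ge 3$ and the explicit behavior of $\varphi$, $w$ near $0$ where $w(r)=r$, $w'(0)=1$). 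Then $\int_0^\infty (wF)'\,dr = 0$, with the caveat that $w'$ (hence $\mathcal W$) has a jump at $r=a$; since $w$ itself is continuous there and $F$ is continuous, $wF$ is continuous and the fundamental theorem of calculus still applies across $r=a$ — this is why \eqref{e:keyLower} is only claimed for $r\ne a$.

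Next I would feed the lower bound \eqref{e:lowerDerivative} for $w'F + wF'$ into $0 = \int_0^\infty (wF)'\,dr$. Integrating, and using \eqref{e:keyLower} to absorb the $\|u\|^2$ and $(\varphi')^2\|u\|^2$-type terms into $\tfrac{E-E_\infty}{2}\int w'\|u\|^2$, one obtains
\begin{equation*}
\frac{E-E_\infty}{2}\int_0^\infty w'\|u\|^2\,dr + \frac{1}{3}\int_0^\infty (w' + 4h^{-1}\varphi' w)\|hu'\|^2\,dr \le 3\int_0^\infty \frac{w^2}{h^2 w'}\|P^\pm_\varphi(h)u\|^2\,dr \pm 2\varepsilon\int_0^\infty w\,\imag\langle u,u'\rangle\,dr,
\end{equation*}
the sign/constant bookkeeping requiring that the constant $K$ in \eqref{e:goalEst} be taken $\ge 6$ so that $\tfrac{K}{2}B$ dominates the $3B$ appearing in \eqref{e:lowerDerivative}. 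The $\|hu'\|^2$ term is nonnegative and can be dropped from the left (it is positive since $\varphi',w>0$). For the $\varepsilon$ term, I would bound $|\imag\langle u,u'\rangle|$ using Cauchy–Schwarz against a small multiple of the $\|hu'\|^2$ term and the $\|u\|^2$ term, or more simply estimate $2\varepsilon\int w|\langle u,u'\rangle| \le \varepsilon\big(\gamma^{-1}\int w' \|u\|^2 + \gamma \int \tfrac{w^2}{w'}\|u'\|^2\big)$ and then bound $\tfrac{w^2}{w'}$ via \eqref{w squared over w prime}; alternatively one uses the cruder bound $\sup_{\mathrm{supp}\,v}|\varphi|$ and $w\le C_\eta h^{-M}$ to put the whole term under $C_\eta e^{C/h}\varepsilon\|e^{\varphi/h}v\|^2$.

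The final step is to translate the weighted-$L^2$ inequality in $u$ back to $v$. On the left I use \eqref{univ lwr bd w prime}, $w'(r)\ge (r+1)^{-1-\eta}$, to get $\int w'\|u\|^2 \gtrsim \int \langle r\rangle^{-1-\eta}\|u\|^2\,dr = \|\langle x\rangle^{-(1+\eta)/2} e^{\varphi/h} v\|_{L^2(\mathbb{R}^n)}^2$ after undoing the polar-coordinate and $r^{(n-1)/2}$ normalization (noting $\|u(r,\cdot)\|_{L^2(\mathbb S^{n-1})}^2 = \|v(r,\cdot)\|^2 r^{n-1} e^{2\varphi/h}$). On the right I use \eqref{w squared over w prime}, $\tfrac{w^2}{w'}\le C_\eta h^{-M}(1+r)^{1+\eta}$, together with $h^{-2}\le h^{-2}$ and the fact that $h^{-M}h^{-2}\le C e^{C/h}$, to bound $3\int \tfrac{w^2}{h^2 w'}\|P^\pm_\varphi u\|^2 \le C_\eta e^{C/h}\|\langle x\rangle^{(1+\eta)/2} e^{\varphi/h}(P-E\pm i\varepsilon)v\|_{L^2(\mathbb{R}^n)}^2$. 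In the $n=1$ case the same argument runs with \eqref{univ lwr bd w prime 1d}, \eqref{w squared over w prime 1d}, and $w\le 1$, the extra $C_\eta e^{-C/h}$ in the lower bound for $w'$ being exactly what produces the $C_\eta e^{C/h}$ prefactor on the right. The main obstacle I anticipate is the careful justification of the boundary terms at $r=0$ and the legitimacy of integrating the distributional identity $(wF)'$ across the jump of $w'$ at $r=a$ — i.e. verifying that no singular contribution is dropped — and the constant chase ensuring $K\ge 6$ (or whatever threshold) is compatible with Proposition~\ref{p:key}, which was stated for arbitrary fixed $K>0$.
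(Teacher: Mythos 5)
Your overall strategy (integrate $(wF)'$ over $(0,\infty)$ or $\R$, invoke \eqref{e:keyLower}/\eqref{e:keyLower1} to control the potential terms, then use \eqref{univ bd w}--\eqref{w squared over w prime} or their one--dimensional analogues to pass back to $v$) is the same as the paper's, and your remarks about the boundary terms, the jump of $w'$ at $r=a$, and the choice $K\ge 6$ are all fine. The genuine gap is in your treatment of the term $\mp 2\varepsilon\int w\,\imag\langle u,u'\rangle$, and none of the three alternatives you sketch actually closes it. The target \eqref{Carleman est} allows only $\varepsilon\,C_\eta e^{C/h}\|e^{\varphi/h}v\|^2$, i.e.\ $\varepsilon\int\|u\|^2$ with \emph{no} derivative, so the issue is to eliminate $u'$ from this term with constants uniform in $v$. (i) Absorbing it into $\tfrac13\int w'\|hu'\|^2$ by Cauchy--Schwarz fails because the $\varepsilon$-term carries the weight $w$ while the good term carries $w'$, and $w/w'=\mathcal W\sim(1+r)^{1+\eta}$ is unbounded: after splitting $2\varepsilon w\|u\|\|u'\|\le \varepsilon\gamma^{-1}w'\|u\|^2+\varepsilon\gamma\,(w^2/w')\|u'\|^2$ and using \eqref{w squared over w prime}, the leftover $\varepsilon\gamma\int(1+r)^{1+\eta}\|u'\|^2$ can be dominated by $\int w'\|hu'\|^2\gtrsim\int(1+r)^{-1-\eta}\|hu'\|^2$ only with an $r$-dependent (hence support-of-$v$-dependent) constant, which is not permitted; trying to trade smallness through $\varepsilon$ instead produces $\varepsilon^2 h^{-2}\int(w^2/w')\|u\|^2$, which again has a growing weight and is not bounded by $\varepsilon\int\|u\|^2$. (ii) The ``cruder bound'' $2\varepsilon\int w|\imag\langle u,u'\rangle|\le C_\eta e^{C/h}\varepsilon\|e^{\varphi/h}v\|^2$ is simply false, since $u'$ contains $e^{\varphi/h}r^{(n-1)/2}\partial_r v$ and $\int|u'|^2$ is not controlled by $\int|u|^2$.

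What is missing is the paper's additional step: first bound the term crudely by $\varepsilon\,C_\eta e^{C/h}\int(\|u\|^2+\|hu'\|^2)$ as in \eqref{penult est}, and then remove the derivative by pairing the conjugated operator against $\bar u$ and integrating by parts, as in \eqref{rewrite Pu bar u} and \eqref{int by parts}: using $\Lambda\ge0$ and $|V+E-(\varphi')^2|\le e^{C/h}$ this gives \eqref{handle deriv u term},
\[
\int\|hu'\|^2\le e^{C/h}\int\|u\|^2+\frac\gamma2\int(1+r)^{-1-\eta}\|u\|^2+\frac1{2\gamma}\int(1+r)^{1+\eta}\|P^\pm_\varphi(h)u\|^2,
\]
after which (recalling one may take $0\le\varepsilon\le1$, and choosing $\gamma$ small depending on $h$ but not on $\varepsilon$) the weighted $\|u\|^2$ piece is absorbed into the left-hand side of \eqref{penult est}, yielding \eqref{final est} and hence \eqref{Carleman est}. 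Note also that since you plan to absorb into the $\|hu'\|^2$ term you cannot simultaneously discard it from the left as you propose; in the paper it is precisely this elliptic-type identity, not an absorption against the weighted gradient term, that disposes of the $\varepsilon$-term. With this step added, the rest of your argument goes through as written, in both $n\ge3$ and $n=1$.
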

\noindent {\bf{Remark:}} Throughout the proof of Lemma \ref{Carleman lemma}, we abuse notation slightly. In dimension $n\geq 3$, we put $\|u(r)\|=\|u(r, \cdot)\|_{L^2(\US_\theta^{n-1})}$, while we put $\|u(x)\|=|u(x)|$ when $n=1$. When $n \ge 3$, $\int_{r,\theta}$ denotes the integral over $(0,\infty) \times \US^{n-1}$ with respect to the measure $dr d\theta$, while $\int_{r,\theta}$ denotes $\int_\R dx$ when $n=1$. 
\smallskip
\begin{proof}
Since $\langle x \rangle^{-(1+ \eta)/2} \le 1$, without loss of generality, we may assume $0 \le \varepsilon \le 1$.

The proof begins from~\eqref{e:lowerDerivative}.
Then, applying~\eqref{e:keyLower} or~\eqref{e:keyLower1}, it follows that for $h \in (0,h_0]$,
\begin{equation} \label{lower bound wF prime}
   w' F + w F' \ge -\frac{3 w^2}{h^2w'} \| P^{\pm}_\varphi(h)u \|^2 \mp  2\varepsilon w \imag \langle u,u'\rangle + \frac{1}{3}w'\|hu' \|^2 +\frac{E-E_\infty}{2} w' \|u\|^2.
\end{equation}

Now we integrate both sides of \eqref{lower bound wF prime}. For $n\geq 3$, we integrate $\int^\infty_0dr$ and use\\ $wF, \, (wF)' \in L^1((0,\infty);dr)$, and $wF(0) = wF(\infty) = 0$, hence $\int_{0}^\infty (wF)'dr = 0$. In dimension $n=1$, we instead integrate $\int_\R dx$ and observe that $\int_{\R} (wF)'dx=0$. Using also \eqref{univ bd w}, \eqref{univ lwr bd w prime} and \eqref{w squared over w prime} when $n \ge 3$, or  \eqref{univ bd w 1d}, \eqref{univ lwr bd w prime 1d} and \eqref{w squared over w prime 1d} when $n =1$, yields, for $h \in (0,h_0]$,
\begin{equation} \label{penult est}
    \int_{r,\theta} (r + 1)^{-1-\eta}\left(|u|^2 +|hu'|^2 \right) \leq C_\eta e^{C/h} \int_{r,\theta} (1 + r)^{1 + \eta}|P^{\pm}_\varphi(h)u|^2  + \varepsilon C_\eta e^{C/h} \int_{r, \theta} |u|^2 + |hu'|^2.
\end{equation}

Moreover,
\begin{equation} \label{rewrite Pu bar u}
    \begin{split}
        \real \int_{r,\theta}(P^{\pm}_\varphi u)\overline{u} &= \int_{r,\theta} |hu'|^2  + \real \int_{r,\theta} 2 h\varphi' u' \overline{u} + \int_{r, \theta} (h^2 \Lambda u)u \\
        &+ \int_{r,\theta} h\varphi''|u|^2 + \int_{r,\theta} \left(V + E - (\varphi')^2 \right)
        |u|^2,\\
    \end{split}
\end{equation}
and 
\begin{equation} \label{int by parts}
    \int_{r, \theta} h \varphi''|u|^2 = - \real \int_{r,\theta} 2 \varphi'h u' \overline{u}. 
\end{equation}
These two identities, together with the facts that  $\Lambda \ge 0$ and $|V + E - (\varphi')^2| \le e^{C/h}$ for $h \in (0,1]$, imply,
\begin{equation} \label{handle deriv u term}
\begin{split}
    \int_{r, \theta} &|hu'|^2 \le e^{C/h} \int_{r,\theta} |u|^2 \\
    &+ \frac{\gamma}{2} \int_{r, \theta}(r +1)^{-1-\eta} |u|^2 + \frac{1}{2\gamma} \int_{r, \theta}(r +1)^{1 + \eta} |P^{\pm}_\varphi(u)|^2, \qquad h \in (0,1], \, \gamma > 0.
    \end{split}
\end{equation}

To finish, we substitute \eqref{handle deriv u term} into the right side of \eqref{penult est}, recall $0 \le \varepsilon \le 1$, and then choose $\gamma > 0$ small enough (depending on $h$ but independent of $\varepsilon$), to get
\begin{equation} \label{final est}
\begin{split}
    \int_{r,\theta} &(r + 1)^{-1-\eta}(|u|^2 + |hu'|^2) \leq \\
    & C_\eta e^{C/h} \int_{r,\theta} (1 + r)^{1 + \eta}|P^{\pm}_\varphi(h)u|^2  + \varepsilon C_\eta e^{C/h}  \int_{r, \theta} |u|^2, \qquad h \in (0, h_0].
    \end{split}
\end{equation}
Since 
\begin{equation*} 
2^{-\frac{1 + \eta}{2}} \le \left( \frac{\langle r \rangle}{r+1} \right)^{1 + \eta},
\end{equation*}
  \eqref{Carleman est} is now an easy consequence of \eqref{final est}.\\
\end{proof}

\section{Resolvent estimates}
In this section, we deduce the resolvent estimates in Theorems \ref{thm Linfty high d}, \ref{thm higher dim} and \ref{thm oned} from the Carleman estimate \eqref{Carleman est}. This same argument has been presented before, see, e.g., \cite{da14, sh17, sh19, vo19a, vo19b}.  But we include it here for the reader's convenience and for the sake of completeness.

The constants $C, \,h_0$, and  $C_\eta $ continue to have the same dependencies as in Section \ref{Carleman estimates}.
\begin{proof}[Proof of Theorems~\ref{thm Linfty high d},~\ref{thm higher dim} and~\ref{thm oned}]
Since increasing $s$ in \eqref{defn g} decreases the resolvent norm, to prove \eqref{e:mainResult Linfty}, \eqref{e:mainResult} and \eqref{resolv est oned}, we may assume without loss of generality that $0 < 2s -1 < 1$. 

Fix $\eta = 2s-1$. When $n \ge 3$, let $\sigma = \sigma_\alpha$ be as in \eqref{e:sigma}. Let  $\varphi$, $w$, and $h_0 \in (0, 1]$ be as in Proposition~\ref{p:key} ($n\geq 3$) or as in Proposition~\ref{p:key1d} $(n=1)$. Then, Lemma \ref{Carleman lemma} holds. Put $C_\varphi = C_\varphi(h) \defeq 2 \max \varphi$. By \eqref{Carleman est}, for some $C, C_s = C_\eta >0$,
\begin{equation} \label{mult through by exp}
e^{-C_\varphi/h}\|\langle x \rangle^{-s} v \|^2_{L^2} \leq
 C_s e^{C/h} \|\langle x \rangle^{s} (P(h) - E \pm i\varepsilon)v \|^2_{L^2} 
+  \varepsilon C_s e^{C/h} \| v \|^2_{L^2},
\end{equation}
for all $v \in C^\infty_{\text{comp}}(\R^n)$, $\varepsilon \ge 0$, and $h \in (0, h_0]$. Moreover, for any $\gamma > 0$,
\begin{equation} \label{epsilon v}
\begin{split}
2\varepsilon \| v \|^2_{L^2} &= -2 \imag\langle (P(h) - E \pm i\varepsilon)v, v \rangle_{L^2} 
\\& \le  \gamma^{-1}\|\langle x \rangle^{s}(P(h) - E \pm i \varepsilon)v \|^2_{L^2} 
+ \gamma\|\langle x \rangle^{-s} v\|^2_{L^2}.  
\end{split}
\end{equation}
 Setting $\gamma = C_s^{-1} e^{-(C+C_\varphi)/ h} $, and using \eqref{epsilon v} to estimate $\varepsilon \| v \|^2_{L^2}$ from above in \eqref{mult through by exp}, we absorb the $\| \langle x\rangle^{-s} v\|_{L^2}$ term that now appears on the right of \eqref{mult through by exp} into the left side. Multiplying through by $2e^{C_\varphi/h}$, and applying \eqref{e:phiBoundMe} $(n\geq 3)$ we arrive at 
\begin{equation} \label{penult}
 \|\langle x \rangle^{-s} v \|_{L^2}^2 \leq C_s e^{h^{-1-\sigma_\alpha}\left(\frac{C\sigma_\alpha}{3 - 2s}\log(h^{-1}) + C_s \right)} \|\langle x \rangle^s(P(h) - E \pm i \varepsilon)v \|_{L^2}^2, \qquad \varepsilon \ge 0, \, h \in (0,h_0].
\end{equation}
In the case $(n=1)$, we apply instead~\eqref{e:phiBound1} to obtain
\begin{equation} \label{penult1}
 \|\langle x \rangle^{-s} v \|_{L^2}^2 \leq C_s e^{Ch^{-1}} \|\langle x \rangle^s(P(h) - E \pm i \varepsilon)v \|_{L^2}^2, \qquad \varepsilon \ge 0, \, h \in (0,h_0].
\end{equation}

The final task is to use \eqref{penult} and~\eqref{penult1} to obtain the corresponding resolvent estimates to show
\begin{equation} \label{ult}
\begin{aligned}
\|\langle x \rangle^{-s}(&P(h)-E \pm i\varepsilon)^{-1} \langle x \rangle^{-s} f \|^2_{L^2}\\
&\leq C_s e^{h^{-1-\sigma_\alpha}\left(\frac{C\sigma_\alpha}{3-2s} \log(h^{-1}) + C_s \right)} \|f \|_{L^2}^2, & \varepsilon > 0, \, h \in (0,h_0], \, f \in L^2,& \qquad (n\geq 3)\\{}\\
\|\langle x \rangle^{-s}(&P(h)-E \pm i\varepsilon)^{-1} \langle x \rangle^{-s} f \|^2_{L^2}\\ &\leq C_s e^{C h^{-1}} \|f \|_{L^2}^2, & \varepsilon > 0, \, h \in (0,h_0], \, f \in L^2, &\qquad (n=1)
\end{aligned}
\end{equation}
    from which Theorems~\ref{thm Linfty high d},~\ref{thm higher dim} and~\ref{thm oned} follow. To establish \eqref{ult}, we prove a simple Sobolev space estimate  and then apply a density argument that relies on \eqref{penult}. 

The operator
\begin{equation*}
[P(h), \langle x \rangle^s]\langle x \rangle^{-s} = \left(-h^2 \Delta \langle x \rangle^s - 2h^2 (\nabla \langle x \rangle^s) \cdot \nabla \right) \langle x \rangle^{-s}
\end{equation*}
is bounded $H^2 \to L^2$. So, for $v \in H^2$ such that $\langle x \rangle^s v \in H^2$,
 \begin{equation}\label{Ceph}
 \begin{split}
\|\langle x \rangle^{s}(P(h)-E \pm i \varepsilon)v\|_{L^2}  &\le \|(P(h)-E \pm i \varepsilon)\langle x \rangle^{s} v \|_{L^2} +  \|[P(h),\langle x \rangle^{s}]\langle x \rangle^{-s}\langle x \rangle^{s}v \|_{L^2}
\\& \le C_{\varepsilon, h} \| \langle x \rangle^{s}v \|_{H^2},
\end{split} 
\end{equation}
for some constant $C_{\varepsilon, h}>0$ depending on $\varepsilon$ and $h$.

Given $f \in L^2$, the function $\langle x \rangle^{s}(P(h)-E\pm i\varepsilon)^{-1}\langle x \rangle^{-s} f \in H^2$ because 
\begin{equation*}
\begin{split}
\langle x \rangle^{s}&(P(h)-E\pm i\varepsilon)^{-1}\langle x \rangle^{-s} f = (P(h) -E \pm i\varepsilon)^{-1} f + [\langle x \rangle^{s}, (P(h) - E \pm i\varepsilon)^{-1}] \langle x \rangle^{-s}f  
\\& =  (P(h)-E \pm i\varepsilon)^{-1} f + (P(h)-E\pm i\varepsilon)^{-1} [P(h),\langle x \rangle^{s}]  (P(h)- E\pm i\varepsilon)^{-1} \langle x \rangle^{-s}f.
\end{split}
\end{equation*}
Now, choose a sequence $v_k \in C_{\text{comp}}^\infty$ such that $ v_k \to  \langle x \rangle^{s}(P(h)-E \pm i\varepsilon)^{-1}\langle x \rangle^{-s} f$ in $H^2$. Define \\ $\tilde{v}_k \defeq \langle x \rangle^{-s}v_k$. Then, as $k \to \infty$,
\begin{equation*}
\begin{split}
\| \langle x \rangle^{-s} \tilde{v}_k - \langle x \rangle^{-s} (&P(h)-E\pm i \varepsilon)^{-1}\langle x \rangle^{-s}f \|_{L^2}  \\
&\le \| v_k - \langle x \rangle^{s} (P(h)-E\pm i \varepsilon)^{-1}\langle x \rangle^{-s}f \|_{H^2} \to 0.
\end{split}
\end{equation*}
Also, applying \eqref{Ceph},
\begin{equation*}
\|\langle x \rangle^{s}(P(h)-E\pm i \varepsilon)\tilde v_k - f\|_{L^2} \le C_{\varepsilon,h} \|v_k - \langle x \rangle^{s} (P(h)-E \pm i \varepsilon)^{-1} \langle x \rangle^{-s} f \|_{H^2} \to 0.
\end{equation*} 
We then achieve \eqref{ult} by replacing $v$ by $\tilde{v}_k$ in \eqref{penult} and sending $k \to \infty$.\\
\end{proof}

\end{document}